\documentclass[12pt]{amsart}

\usepackage{amsmath,amssymb,amsthm}
\usepackage{mathrsfs,here}
\usepackage[all]{xy}
\usepackage[dvipdfmx,colorlinks=true]{hyperref}

% page size
\topmargin=0cm
\textheight=22cm
\textwidth=17.5cm
\oddsidemargin=-0.5cm
\evensidemargin=-0.5cm

% equation numbers
\numberwithin{equation}{section}

% xypic arrow shape
\SelectTips{eu}{12}

% enumeration number fonts
%\renewcommand{\theenumi}{\roman{enumi}}

% theorem environments
\newtheorem{theorem}{Theorem}[section]
\newtheorem{proposition}[theorem]{Proposition}
\newtheorem{lemma}[theorem]{Lemma}
\newtheorem{corollary}[theorem]{Corollary}

\theoremstyle{definition}
\newtheorem{definition}[theorem]{Definition}

\theoremstyle{remark}
\newtheorem{remark}[theorem]{Remark}

% commands
\newcommand{\Z}{\mathbb{Z}}
\newcommand{\G}{\mathscr{G}}
\newcommand{\SU}{\operatorname{SU}}
\newcommand{\Sp}{\operatorname{Sp}}
\newcommand{\Spin}{\operatorname{Spin}}
\newcommand{\adjoint }{\operatorname{ad}}

% title
\title{Infiniteness of $A_\infty$-types of gauge groups}

\author{Daisuke Kishimoto}
\address{Department of Mathematics, Kyoto University, Kyoto, 606-8502, Japan}
\email{kishi@math.kyoto-u.ac.jp}
\author{Mitsunobu Tsutaya}
\address{Department of Mathematics, Kyoto University, Kyoto, 606-8502, Japan}
\email{tsutaya@math.kyoto-u.ac.jp}
%\date{}
\subjclass[2010]{Primary 57S05; Secondary 55P48}
\keywords{gauge group, $A_\infty$-space, $A_\infty$-type}

\begin{document}

\maketitle

\begin{abstract}
Let $G$ be a compact connected Lie group and let $P$ be a principal $G$-bundle over $K$. The gauge group of $P$ is the topological group of automorphisms of $P$. For fixed $G$ and $K$, consider all principal $G$-bundles $P$ over $K$. It is proved  in \cite{CS,Ts} that the number of $A_n$-types of the gauge groups of $P$ is finite if $n<\infty$ and $K$ is a finite complex. We show that the number of $A_\infty$-types of the gauge groups of $P$ is infinite if $K$ is a sphere and there are infinitely many $P$.
\end{abstract}

% baseline
\baselineskip 16pt

%%%%%%%%%%%%%%%%%%
%  Section 1
%%%%%%%%%%%%%%%%%%

\section{Introduction}

Let $G$ be a compact connected Lie group and let $P$ be a principal $G$-bundle over a base $K$.
The \textit{gauge group} of $P$, denoted by $\G(P)$, is the topological group of all automorphisms of $P$, i.e. $G$-equivariant self-maps of $P$ covering the identity map of $K$, where the multiplication is given by the composite of maps.
In \cite{CS}, Crabb and Sutherland pose the following problem.
For fixed $K$ and $G$, consider all principal $G$-bundles $P$ over $K$: is the number of homotopy types of $\G(P)$, or of $B\G(P)$, finite?
Of course the problem makes sense when there are infinitely many principal $G$-bundles over $K$, or equivalently, the homotopy set $[K,BG]$ is of infinite order.
The first example of this problem is formerly considered by Kono \cite{K}: when $G=\mathrm{SU}(2)$ and $K=S^4$, there are exactly 6 homotopy types of $\G(P)$ while $\pi_4(B\mathrm{SU}(2))$ is of infinite order.
This result is actually a consequence of the complete classification of homotopy types of $\G(P)$ for $G=\mathrm{SU}(2)$ and $K=S^4$.
There are several such examples \cite{HK1,HK2,KKKT,Th}.
In a general setting, Crabb and Sutherland \cite{CS} prove that when $K$ is a finite complex, the number of homotopy types of $\G(P)$ is finite.
They actually prove a stronger result that when $K$ is a finite complex, the number of H-types (i.e. homotopy types as H-spaces) of $\G(P)$ is finite.
Recall that there are intermediate classes of spaces between H-spaces and loop spaces, called {\it$A_n$-spaces} \cite{St}, where $H$-spaces are $A_2$-spaces and loop spaces are $A_\infty$-spaces.
So it is natural to count the number of {\it$A_n$-types} (i.e. homotopy types as $A_n$-spaces) of gauge groups, and the second named author \cite{Ts} generalizes the above result of Crabb and Sutherland to $A_n$-types: when $K$ is a finite complex and $n<\infty$, the number of $A_n$-types of $\G(P)$ is finite.
This generalization makes (in)finiteness of the number of homotopy types of $B\G(P)$, or equivalently, $A_\infty$-types of $\G(P)$, more meaningful.
As for $A_\infty$-types of $\G(P)$, there is only one example due to Masbaum \cite{Ma}: for $G=\mathrm{SU}(2)$ and $K=S^4$ (the same situation as Kono's example) the number of $A_\infty$-types of $\G(P)$ is infinite.
This is a consequence of the cohomology calculation of $B\G(P)$ for the above special $K$ and $G$.
In this paper, we prove infiniteness of the number of $A_\infty$-types of $\G(P)$ in a more general setting by investigating $A_n$-types of gauge groups.

\begin{theorem}
\label{main}
Let $G$ be a compact connected simple Lie group. As $P$ ranges over all principal $G$-bundles over $S^d$, if there are infinitely many isomorphism types for $P$, then there are also infinitely many $A_\infty$-types of gauge groups $\G(P)$.
\end{theorem}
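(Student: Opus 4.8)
The plan is to reduce everything to classifying spaces and then produce a homotopy invariant that grows without bound. Isomorphism classes of principal $G$-bundles over $S^d$ are classified by $\pi_{d-1}(G)\cong\pi_d(BG)$, which is finitely generated since $G$ is compact; so the hypothesis says this group is infinite, and I fix a generator $\alpha$ of an infinite cyclic direct summand and let $P_k$ be the bundle classified by $k\alpha$ for $k\in\Z$. Since an $A_\infty$-equivalence $\G(P)\simeq\G(P')$ is the same as a homotopy equivalence $B\G(P)\simeq B\G(P')$, it suffices to find infinitely many homotopy types among the $B\G(P_k)$, and in fact I will only use the subfamily $k=p^a$ for a well-chosen prime $p$. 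Note that $G$ being simple forces its rational homotopy to be concentrated in the odd degrees $2e_1-1,\dots,2e_\ell-1$ given by the exponents, so $\pi_{d-1}(G)$ being infinite forces $d$ even and $\alpha$ to be a rational class.

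Next I would use the classical identifications $B\G(P_k)\simeq\operatorname{Map}_k(S^d,BG)$ (the path component of the classifying map of $P_k$) and the evaluation fibration
\[
\Omega^d_0 BG\longrightarrow\operatorname{Map}_k(S^d,BG)\xrightarrow{\ \mathrm{ev}\ }BG ,
\]
whose connecting map $\Omega BG\to\Omega^d_0 BG$ induces on homotopy groups the Whitehead product with the class $k\alpha$ of $P_k$, hence $k$ times a fixed homomorphism $[\alpha,-]$. Writing $W_j\subseteq\pi_{j+d-1}(BG)$ for the image of $[\alpha,-]\colon\pi_j(BG)\to\pi_{j+d-1}(BG)$, the long exact sequence of the fibration gives, for each $m$, a short exact sequence
\[
0\longrightarrow\pi_{m+d}(BG)/kW_{m+1}\longrightarrow\pi_m\bigl(B\G(P_k)\bigr)\longrightarrow\ker\bigl(k[\alpha,-]\colon\pi_m(BG)\to\pi_{m+d-1}(BG)\bigr)\longrightarrow0 .
\]
Since $BG_{\mathbb Q}$ is a product of Eilenberg--MacLane spaces, all Whitehead products in $\pi_*(BG)$ are torsion, so every $W_j$ is a finite group; fixing a prime $p$ and $p$-localising, the subgroup $kW_{m+1}=p^{\nu_p(k)}W_{m+1}$ shrinks as $\nu_p(k)$ grows, while the kernel term grows, so $\lvert\pi_m(B\G(P_k))_{(p)}\rvert$ is a nondecreasing function of $\nu_p(k)$ that stabilises once $\nu_p(k)$ exceeds the $p$-exponent of $W_{m+1}$, and strictly increases precisely while $\nu_p(k)$ is below that exponent.

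It follows that if the $p$-exponents of the groups $W_{m+1}$ are \emph{unbounded} as $m\to\infty$, then for every $a\ge0$ there is a degree $m$ in which $\lvert\pi_m(B\G(P_{p^a}))_{(p)}\rvert<\lvert\pi_m(B\G(P_{p^{a+1}}))_{(p)}\rvert\le\lvert\pi_m(B\G(P_{p^b}))_{(p)}\rvert$ for all $b>a$; hence no two of $B\G(P_{p^0}),B\G(P_{p^1}),B\G(P_{p^2}),\dots$ are homotopy equivalent, and the theorem follows. This is compatible with \cite{Ts}: each individual homotopy group, and hence each finite Postnikov stage, of $B\G(P_k)$ takes only finitely many values as $k$ varies, so there is no contradiction with the finiteness of $A_n$-types; it is only by passing to the whole space that infinitely many types appear.

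The crux — and the step I expect to be hard — is thus the purely homotopy-theoretic claim that for some prime $p$ the Whitehead products $[\alpha,\beta]$, equivalently the Samelson products $\langle\alpha,\beta\rangle$ in $\pi_*(G)$, have $p$-primary orders that are unbounded as $\beta$ ranges over higher and higher homotopy groups. I would try to prove this by combining the unbounded $p$-torsion in $\pi_*(G)$ with the fact that $\alpha$ generates a rational summand, most likely by reinterpreting the statement through the Serre spectral sequence of the evaluation fibration as the assertion that $H^*(B\G(P_k);\Z)$ must contain $p$-torsion classes whose orders grow with $\nu_p(k)$; this would recover Masbaum's computation for $G=\SU(2)$, $d=4$, as the special case and provide the required invariant in general. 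With that lemma in place, the bookkeeping above completes the proof.
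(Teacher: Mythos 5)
The gap is precisely the ``crux'' you name and leave unproved: the claim that for some prime $p$ the $p$-exponents of $W_{m+1}=\mathrm{im}\bigl([\alpha,-]\colon\pi_{m+1}(BG)\to\pi_{m+d}(BG)\bigr)$ are unbounded as $m\to\infty$. This is not merely hard --- it is false in general, and already in the motivating case $G=\SU(2)$, $d=4$. There $W_{m+1}$ is a subgroup of $\pi_{m+3}(S^3)$, and by James (at $p=2$) and Cohen--Moore--Neisendorfer (at odd $p$) the $p$-torsion of $\pi_*(S^3)$ has exponent dividing $4$, resp.\ $p$; more generally, at any $p$-regular prime $G\simeq_{(p)}S^{2n_1-1}\times\cdots\times S^{2n_\ell-1}$ and the $p$-exponent of the torsion in $\pi_*(G)$ is bounded by the sphere exponents. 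Consequently $p^{\nu_p(k)}W_{m+1}=0$ for \emph{all} $m$ once $\nu_p(k)$ exceeds a bound independent of $m$, your invariant $\lvert\pi_m(B\G(P_{p^a}))_{(p)}\rvert$ stabilizes in $a$ uniformly in $m$, and a single prime can separate only finitely many of the spaces $B\G(P_{p^a})$ by orders of homotopy groups. (This is consistent with Masbaum having to use the ring structure of $H^*(B\G(P))$ rather than homotopy groups to get infiniteness for $\SU(2)$.) The bookkeeping with the evaluation fibration and the Whitehead-product connecting map is fine as far as it goes, but it cannot reach the conclusion.

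The paper's proof is organized around the opposite quantifier: it uses \emph{infinitely many primes}, extracting from each sufficiently large prime $p$ only one bit of information, namely whether $\alpha$ is divisible by $p$. Proposition \ref{divisibility-1} shows that divisibility of $\alpha$ by $p^N$ for $N$ large forces $\G(P_\alpha)_{(p)}$ to have the $A_n$-type of $\G(S^d\times G)_{(p)}$, while Propositions \ref{gauge-ad} and \ref{divisibility-2} show that for $p>2n_\ell$ and $n=n_\ell+p-1$ this $A_n$-equivalence forces $p\mid\alpha$; the detection goes through the obstruction-theoretic description of $A_n$-sections (Lemma \ref{obstruction}), the mod $p$ cohomology of the projective spaces $P^nG$, Lannes' $T$-functor and the Adams--Wilkerson embedding, none of which appears in your outline. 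The bundles $P_{p_1^{N_1}\cdots p_k^{N_k}}$ are then pairwise distinguished by localizing at a prime dividing one index but not the other. To salvage your approach you would have to replace ``unbounded $p$-exponent at one prime'' by ``a nontrivial divisibility obstruction at infinitely many primes,'' which is essentially what the paper does --- but that obstruction is invisible in the orders of homotopy groups and requires the unstable-algebra machinery of Section 2 together with the $A_n$-structure (the value of $n$ must grow with $p$), not just the homotopy type of $B\G(P)$ degree by degree.
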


As in \cite{G} and \cite{AB}, it is well-known that the connected component of the mapping space $\mathrm{map}(K,BG)$ containing a map $\alpha\colon K\to BG$ has the weak homotopy type of the classifying space of the gauge group $\mathscr{G}(\alpha^\ast EG)$.
Hence the above theorem also implies that infinitely many different weak homotopy types appear among the connected components of $\mathrm{map}(S^d,BG)$.

This work began during the authors' stay at University of Southampton.
They would like to thank University of Southampton for hospitality, and would also like to thank Stephen Theriault for invitation and fruitful conversation.
The first author was partially supported by JSPS KAKENHI Grant Number 25400087.

%%%%%%%%%%%%%%%%%%
%  Section 2
%%%%%%%%%%%%%%%%%%

\section{Preliminaries}

This section collects properties of $A_n$-maps and unstable algebras over the Steenrod operations which we will employ.

%%%%%%%%%%%%%%%%%%
%  subsection 2.1
%%%%%%%%%%%%%%%%%%

\subsection{$A_n$-maps}

In this paper, $A_n$-maps mean $A_n$-maps between topological monoids while there is a generalized definition of $A_n$-maps between $A_n$-spaces.
This section collects basics of (fiberwise) $A_n$-maps between (fiberwise) topological monoids.
We first recall the definition of $A_n$-maps between topological monoids due to Stasheff, where our references for $A_n$-maps between topological monoids are \cite{St} and \cite{Fu}.

\begin{definition}
A map $f\colon X\to Y$ between topological monoids $X,Y$ is called an \textit{$A_n$-map} if there is a collection of maps $\{h_i\colon I^{i-1}\times X^i\to Y\}_{i=1}^n$ satisfying $h_1=f$ and for $i>1$
\begin{multline*}
h_i(t_1,\ldots,t_{i-1};x_1,\ldots,x_i)\\
=\begin{cases}h_{i-1}(t_1,\ldots,\widehat{t_j},\ldots,t_{i-1};x_1,\ldots,x_jx_{j+1},\ldots,x_i)&t_j=0\\
h_j(t_1,\ldots,t_{j-1};x_1,\ldots,x_j)h_{i-j}(t_{j+1},\ldots,t_{i-1};x_{j+1},\ldots,x_i)&t_j=1.\end{cases}
\end{multline*}
\end{definition}

A homotopy equivalence between topological monoids which is an $A_n$-map is called an \textit{$A_n$-equivalence}.
$A_n$-maps between topological monoids have the following properties, implying that $A_n$-equivalences yield an equivalence relation.
The equivalence classes are called \textit{$A_n$-types}.

\begin{proposition}
\label{A_n-property}
\begin{enumerate}
\item The composite of $A_n$-maps is an $A_n$-map.
\item The homotopy inverse of a homotopy equivalence which is an $A_n$-map is an $A_n$-map.
\end{enumerate}
\end{proposition}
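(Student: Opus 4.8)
The plan is to derive both statements from the classical dictionary, due to Stasheff and Fuchs, between $A_n$-maps of topological monoids and maps of the truncated bar constructions. Recall that for a topological monoid $X$ the two-sided bar construction $BX$ carries the skeletal filtration $\ast=B_0X\subset B_1X\subset\cdots\subset B_nX\subset\cdots\subset BX$, with $B_1X=\Sigma X$, successive subquotients built functorially from smash powers of $X$, and each inclusion $B_{k-1}X\hookrightarrow B_kX$ a cofibration. The facts I would invoke (see \cite{St} and \cite{Fu}) are: (i) a map $f\colon X\to Y$ underlies an $A_n$-map, for a suitable choice of structure maps $\{h_i\}_{i=1}^n$, if and only if $f$ extends to a filtration-preserving map $\bar f\colon B_nX\to B_nY$ with $\bar f|_{B_1X}=\Sigma f$; (ii) this correspondence is compatible with composition; and (iii) if $f$ is a homotopy equivalence then so is $\bar f$, since the induced maps of subquotients are then homotopy equivalences and one argues up the filtration.

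Granting (i)--(iii), part (1) is immediate: given $A_n$-maps $f\colon X\to Y$ and $g\colon Y\to Z$ with structure maps inducing $\bar f\colon B_nX\to B_nY$ and $\bar g\colon B_nY\to B_nZ$, the composite $\bar g\circ\bar f\colon B_nX\to B_nZ$ is filtration-preserving and restricts on $B_1X$ to $\Sigma(g\circ f)$, so $g\circ f$ is an $A_n$-map. Alternatively one can exhibit structure maps for $g\circ f$ directly in the cube model of the Definition: take $g\circ f$ for $i=1$, and for $i>1$ subdivide the cube $I^{i-1}$ so that $g\circ h_i$ is used near the faces $t_j=0$ and the maps $k_p$ evaluated on products of values of the $h_q$ are used near the faces $t_j=1$; the boundary identities of the Definition then follow by a direct check. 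This explicit route is essentially Stasheff's, and its bookkeeping is the only laborious point.

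For part (2), let $f\colon X\to Y$ be a homotopy equivalence which is an $A_n$-map and let $g\colon Y\to X$ be a homotopy inverse. Choosing structure maps for $f$ and using (iii), we obtain a homotopy equivalence $\bar f\colon B_nX\to B_nY$ with $\bar f|_{B_1X}=\Sigma f$, and since the bar filtration consists of cofibrations we may choose a filtration-preserving homotopy inverse $\bar g\colon B_nY\to B_nX$ together with filtration-preserving homotopies $\bar g\bar f\simeq\mathrm{id}$ and $\bar f\bar g\simeq\mathrm{id}$. Restricting to $B_1$ shows that $\bar g|_{B_1Y}\colon\Sigma Y\to\Sigma X$ is a homotopy inverse of $\Sigma f$, hence homotopic to $\Sigma g$, since $\Sigma g$ is also one. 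Using the cofibrations $B_{k-1}Y\hookrightarrow B_kY$ we then deform $\bar g$, inductively over the filtration and through filtration-preserving maps, to one whose restriction to $B_1Y$ is exactly $\Sigma g$; by (i) this means $g$ is an $A_n$-map.

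The real content, and the main obstacle, is therefore the verification of (i)--(iii): translating the cube data of the Definition into filtration-preserving maps of bar constructions and tracking their behaviour under composition and under homotopy equivalence. As this is precisely what Stasheff does for the associated projective spaces and what Fuchs carries out, in the paper I would state Proposition~\ref{A_n-property} with a reference to \cite{St} and \cite{Fu}, using only its two consequences in what follows.
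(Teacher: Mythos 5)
The paper offers no proof of this proposition: it is stated as a standard property of $A_n$-maps with the references \cite{St} and \cite{Fu}, which is what you propose to do in the end, so your ultimate plan matches the paper. The substantive content of your sketch is the bar-filtration argument, and its weak point is the ``if'' direction of your claim (i): the existence of a filtration-preserving extension of $\Sigma f$ implies that $f$ is an $A_n$-map only under additional hypotheses --- the paper's own Proposition \ref{P^nX} assumes $X$ and $Y$ are grouplike for exactly this reason, whereas the proposition you are proving is stated for arbitrary topological monoids. Both halves of your argument terminate by invoking this converse (the composite $\bar g\circ\bar f$ in part (1), and the deformed $\bar g$ in part (2)), so as written the sketch establishes the proposition only for grouplike monoids. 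The route that works in general, and the one the cited references actually take, is the direct one you mention as an alternative for part (1): exhibit the structure maps $\{h_i\}$ explicitly, by subdividing the cubes $I^{i-1}$ for a composite, and, for the homotopy inverse, by assembling the $h_i$ for $g$ from those for $f$ together with chosen homotopies $gf\simeq 1$ and $fg\simeq 1$. If you want the bar-construction argument to stand on its own, you must either restrict to grouplike monoids (which would suffice for every application in this paper, where the monoids are gauge groups and localized Lie groups) or supply the converse of (i) in the required generality; otherwise the honest course is the one you end with, namely citing \cite{St} and \cite{Fu}.
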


In \cite{KK}, a straightforward generalization of $A_n$-maps to fiberwise topological monoids is introduced, and it is shown that they have analogous properties in Proposition \ref{A_n-property}.
Then a fiberwise homotopy equivalence between fiberwise topological monoids which is a fiberwise $A_n$-map is called a \textit{fiberwise $A_n$-equivalence}, and fiberwise $A_n$-equivalences yield an equivalence relation.

We now return to the usual $A_n$-maps.
It is complicated to verify that a given map is an $A_n$-map by checking definition.
There is a useful equivalence condition for a map being an $A_n$-map.
For a topological monoid $X$, let $P^nX$ be the $n$-th projective space constructed by $n$-times iterated use of the Dold-Lashof construction.
We denote the classifying space of $X$ by $BX$, i.e. $BX=P^\infty X$.
If a map $f\colon X\to Y$ between topological monoids $X,Y$ is an $A_n$-map, it induces a map $P^nf\colon P^nX\to P^nY$ satisfying the homotopy commutative diagram
\begin{equation}
\label{P^nf}
\xymatrix{\Sigma X\ar[r]^{\Sigma f}\ar[d]&\Sigma Y\ar[d]\\
P^nX\ar[r]^{P^nf}&P^nY}
\end{equation}
where the vertical arrows are the canonical maps.
The following proposition shows that the existence of a map like $P^nf$ is a necessary and sufficient condition for $f$ being an $A_n$-map.

\begin{proposition}
\label{P^nX}
Let $X$ and $Y$ be grouplike topological monoids.
Then, a map $f\colon X\to Y$ is an $A_n$-map if and only if there is a map $\bar{f}\colon P^nX\to BY$ satisfying the homotopy commutative diagram
$$\xymatrix{\Sigma X\ar[r]^{\Sigma f}\ar[d]&\Sigma Y\ar[d]\ar[d]\\
P^nX\ar[r]^{\bar{f}}&BY}$$
where the vertical arrows are the canonical maps.
\end{proposition}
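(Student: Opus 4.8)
The plan is to prove the two implications separately; the ``only if'' part is little more than an unravelling of the construction recalled just before the statement, while the ``if'' part carries the real content. For ``only if'', assume $f$ is an $A_n$-map with structure maps $\{h_i\}_{i=1}^n$. As recalled above, these assemble into a map $P^nf\colon P^nX\to P^nY$ fitting into the homotopy commutative diagram \eqref{P^nf}, whose vertical arrows are the canonical maps; composing $P^nf$ with the canonical inclusion $P^nY\hookrightarrow P^\infty Y=BY$ produces $\bar f\colon P^nX\to BY$, and since $\Sigma Y\to P^nY\hookrightarrow BY$ is exactly the canonical map $\Sigma Y\to BY$, the diagram in the statement commutes up to homotopy. (Grouplikeness is not used here.)

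For ``if'', suppose $\bar f\colon P^nX\to BY$ is given with its restriction to the canonical copy of $\Sigma X=P^1X$ homotopic to $\Sigma X\xrightarrow{\Sigma f}\Sigma Y\to BY$. Since $P^1X\hookrightarrow P^nX$ is a cofibration, the homotopy extension property lets us replace $\bar f$ by a homotopic map agreeing with that composite on $P^1X$ on the nose, so we may set $h_1=f$. I would then build $h_2,\dots,h_n$ by induction along the Dold--Lashof filtration $P^1X\subset P^2X\subset\dots\subset P^nX$, in which $P^{k+1}X$ is obtained from $P^kX$ by coning off the total space $E_kX$ of the Dold--Lashof quasifibration $X\to E_kX\to P^kX$ (with $E_0X=X$), and $E_kX$ is filtered by the bar construction with top cell a product of a $k$-simplex with $X^{k+1}$ carrying precisely the parameters $(t_1,\dots,t_k;x_1,\dots,x_{k+1})$. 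Because $Y$ is grouplike, $EY\to BY$ is a quasifibration with fibre $Y$ and contractible total space, so a map into $BY$ is nullhomotopic exactly when it lifts to $EY$. Assuming $h_1,\dots,h_k$ built and used, via the homotopy extension property, to normalise $\bar f$ on $P^kX$, the restriction of $\bar f$ to the new cone $CE_kX$ is a nullhomotopy of $E_kX\to P^kX\xrightarrow{\bar f}BY$, hence lifts to a map $E_kX\to EY$; reading off its top bar cell, in which the last factor of $X^{k+1}$ plays the role of the fibre coordinate, yields $h_{k+1}\colon I^k\times X^{k+1}\to Y$, and the face identifications of the bar construction become exactly the boundary relations at $t_j=0$ and $t_j=1$. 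This is, in different packaging, the content of \cite{St} and \cite{Fu}, to which one should refer for the combinatorial bookkeeping.

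The step I expect to be the main obstacle is precisely this reverse translation: organising the Dold--Lashof data of $X$ and $Y$ compatibly --- the constructions yield quasifibrations rather than fibrations, so one works quasifibration-wise throughout or passes to coherent fibrewise fibrant replacements --- and verifying that the cell-by-cell extracted homotopies satisfy Stasheff's boundary conditions strictly rather than merely up to higher homotopy. One cannot sidestep this by first compressing $\bar f$ into $P^nY$ and quoting the comparison of $n$-th projective spaces: the pair $(BY,P^nY)$ is highly connected only in a range growing linearly in $n$, whereas $P^nX$ in general has cells in arbitrarily high dimensions, so such a compression is impossible --- which is exactly why the proposition is phrased with $BY$ and not $P^nY$.
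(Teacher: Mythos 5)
The paper offers no proof of this proposition at all: it is recalled as a known result, with \cite{St} and \cite{Fu} serving as the references for the whole theory of $A_n$-maps between topological monoids (the closest published statement in the authors' own work is in \cite{Ts}). So there is nothing in the paper to compare your argument against line by line; I can only assess your sketch against the standard literature proof, which it follows in outline. The ``only if'' half is correct and complete as you state it. For the ``if'' half, your overall strategy --- induct over the Dold--Lashof filtration, read off $h_{k+1}$ from the top bar cell of $E_kX$, and check Stasheff's boundary relations --- is the right one, and you correctly identify where the real work lies.

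There is, however, one step in your mechanism that would fail as written: you assert that a map into $BY$ is nullhomotopic exactly when it lifts to $EY$, and you use this to lift $E_kX\to BY$ through $EY\to BY$. But $EY\to BY$ arising from the bar/Dold--Lashof construction is only a quasifibration (even for grouplike $Y$), and quasifibrations do not satisfy the homotopy lifting property, so nullhomotopy does not give you a lift against it; you acknowledge this parenthetically but the ``fibrewise fibrant replacement'' you invoke is exactly the point at which the fibre stops being $Y$ on the nose. The standard repair is to lift instead against the path--loop fibration $PBY\to BY$, which is an honest fibration with fibre $\Omega BY$; the extracted homotopies then constitute an $A_n$-form not on $f$ but on the composite $\eta_Y\circ f\colon X\to\Omega BY$, where $\eta_Y\colon Y\to\Omega BY$ is the canonical map. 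This is precisely where grouplikeness of $Y$ enters: it makes $\eta_Y$ a homotopy equivalence, hence (being an $A_\infty$-map) an $A_\infty$-equivalence with an $A_\infty$-inverse by Proposition \ref{A_n-property}, and composing transfers the $A_n$-form back to a map homotopic to $f$, after which the homotopy extension property rectifies $h_1$ to equal $f$. If you route the argument this way, the quasifibration issue disappears and the remaining combinatorics (matching the bar-construction face identifications with Stasheff's relations at $t_j=0,1$) is legitimately deferred to \cite{St} and \cite{Fu}, which is no more than the paper itself does.
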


%%%%%%%%%%%%%%%%%%
%  subsection 2.2
%%%%%%%%%%%%%%%%%%

\subsection{Unstable algebras}

Throughout this subsection, the ground field of algebras, including the Steenrod algebra, are $\Z/p$, and maps between algebras with actions of the Steenrod algebra, not necessarily unstable, preserve these actions.  We first recall results on embeddings of unstable algebras into $H^*(BT^\ell;\Z/p)$ due to Adams and Wilkerson which we will employ, where we refer to \cite{AW} for more precise statements. Algebraic and integral extensions of graded algebras are defined in the same manner of usual rings by using homogeneous polynomials. We consider the following condition of graded algebras:
\begin{equation}
\label{condition}
K^*\text{ is a connected finitely generated graded domain and }K^{\rm odd}=0.
\end{equation}
Unstable algebras satisfying \eqref{condition} in mind are the cohomology $H^*(BG;\Z/p)$ for a connected compact Lie group $G$ without $p$-torsion in integral homology and its unstable subalgebras. 

\begin{theorem}
[Adams and Wilkerson {\cite[Theorem 1.1 and 1.6]{AW}}]
\label{AW1}
If an unstable algebra $K^*$ satisfies the condition \eqref{condition}, there is an algebraic extension 
$$K^*\to H^*(BT^\ell;\Z/p)$$ 
for some $\ell$, which is unique up to automorphisms of $H^*(BT^\ell;\Z/p)$.
\end{theorem}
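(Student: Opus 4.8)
The plan is to reduce the statement to a rigidity property of graded fields carrying an unstable action of the Steenrod algebra, and then to build the asserted extension out of such a field embedding. First I would pass to the graded field of fractions $F$ of $K^*$; this is a field because \eqref{condition} makes $K^*$ a commutative graded domain concentrated in even degrees, and its transcendence degree over $\Z/p$ equals the integer $\ell$ in the statement. The $\mathcal{A}_p$-action extends to $F$: the total Steenrod power $P_\tau=\sum_{i\ge0}P^i\tau^i\colon K^*\to K^*[[\tau]]$ is multiplicative by the Cartan formula, so one sets $P_\tau(a/b)=P_\tau(a)\,P_\tau(b)^{-1}$ --- which makes sense since the constant term of $P_\tau(b)$ is the unit $b$ --- and checks that the Adem relations and the Cartan formula persist on $F$ and that the resulting action restricts to the given unstable action on $K^*$.

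The heart of the argument --- and the step I expect to be the main obstacle --- is the Adams--Wilkerson embedding itself: there is an $\mathcal{A}_p$-equivariant algebra monomorphism $K^*\to\Z/p[t_1,\dots,t_\ell]=H^*(BT^\ell;\Z/p)$, with $|t_j|=2$ and $P_\tau t_j=t_j+t_j^p\tau$ (the evident modification at $p=2$), over which $\Z/p[t_1,\dots,t_\ell]$ is integral. I would prove this first at the level of fraction fields --- embedding $F$ algebraically and $\mathcal{A}_p$-equivariantly into the rational function field $\Z/p(t_1,\dots,t_\ell)$ --- and then descend. The field embedding is where the real work sits: instability forces $F$ to become rational on degree-$2$ generators after a controlled algebraic extension. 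The mechanism is that instability identifies the top Steenrod power with the Frobenius, which rigidifies how the $\mathcal{A}_p$-action meshes with $p$-th powers, while a $p$-th power root of a nonzero degree-$2$ class would have non-integral degree, so the degree-$2$ classes one extracts from an algebraic closure of $F$ are genuine new elements rather than inseparability artefacts; one then checks that the $\Z/p$-subfield they generate has transcendence degree $\ell$, is rational, contains $F$, and carries the standard $\mathcal{A}_p$-structure. To descend from this field embedding to algebras I would take the integral closure of the image of $K^*$ under it inside $\Z/p(t_1,\dots,t_\ell)$; a Noether-normalization argument, together with the rigidity of degree-$2$ classes under $\mathcal{A}_p$ used below, identifies this integral closure --- for a suitable choice of the $t_j$ --- with $\Z/p[t_1,\dots,t_\ell]$.

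Finally, for uniqueness let $\phi_1,\phi_2\colon K^*\to H^*(BT^\ell;\Z/p)$ be two such extensions, and write $\widetilde{\phi}_i$ for the induced maps of fraction fields. Each $\widetilde{\phi}_i$ exhibits $\Z/p(t_1,\dots,t_\ell)$ as a rational degree-$2$ algebraic $\mathcal{A}_p$-extension of $F$, so by the essential uniqueness of such embeddings there is an $\mathcal{A}_p$-automorphism $\psi$ of $\Z/p(t_1,\dots,t_\ell)$ with $\psi\circ\widetilde{\phi}_1=\widetilde{\phi}_2$. It remains to see that $\psi$ preserves the polynomial subring $\Z/p[t_1,\dots,t_\ell]$. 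Since $\psi$ is a ring map commuting with the Steenrod operations and $P^1t_j=t_j^p$, each $\psi(t_j)$ is a homogeneous degree-$2$ element whose top Steenrod power is its own $p$-th power; writing $\psi(t_j)=f/g$ in lowest terms and evaluating this relation through $P_\tau$ yields a polynomial identity which forces $g$ to be a nonzero constant, so $\psi(t_j)$ is a linear form in the $t_k$. As $\psi^{-1}$ has the same property, the linear substitution given by the $\psi(t_j)$ is invertible, so $\psi$ restricts to an automorphism of $\Z/p[t_1,\dots,t_\ell]=H^*(BT^\ell;\Z/p)$; this restriction is automatically $\mathcal{A}_p$-equivariant, since any degree-preserving algebra automorphism of a polynomial algebra on degree-$2$ generators commutes with the Steenrod operations. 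Hence $\psi\circ\phi_1=\phi_2$, which gives uniqueness up to automorphisms of $H^*(BT^\ell;\Z/p)$.
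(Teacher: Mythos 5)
This statement is not proved in the paper at all: it is imported verbatim as Theorem~1.1 and~1.6 of Adams--Wilkerson \cite{AW} and used as a black box, so there is no in-paper argument to compare yours against. What you have written is an attempt to reprove the main theorem of \cite{AW} itself. Your outline does follow the actual architecture of that paper --- pass to the graded field of fractions, extend the $\mathcal{A}_p$-action via the multiplicativity of the total power $P_\tau$ and the invertibility of $P_\tau(b)$ in $F[[\tau]]$, embed the field $\mathcal{A}_p$-equivariantly into $\Z/p(t_1,\dots,t_\ell)$, and recover the polynomial algebra by integral closure --- and the uniqueness reduction (degree-$2$ elements $x$ with $\mathscr{P}^1x=x^p$ form an $\mathbb{F}_p$-vector space, which one identifies with the span of the $t_j$, forcing any equivariant automorphism of the fraction field to restrict to a linear automorphism of $\Z/p[t_1,\dots,t_\ell]$) is the right mechanism.

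The genuine gap is exactly where you flag it: the assertion that ``instability forces $F$ to become rational on degree-$2$ generators after a controlled algebraic extension'' is the entire content of Adams--Wilkerson's structure theory, not a checkable step. To make it a proof one needs (i) the existence of an algebraic closure of $F$ in the category of evenly graded integral domains with unstable $\mathcal{A}_p$-action of bounded transcendence degree, (ii) the structure theorem that such an algebraically closed object of transcendence degree $\ell$ is the fraction field of $H^*(BT^\ell;\Z/p)$ with its standard action (this is where the identification of the top Steenrod power with Frobenius and the $\lambda$-type operations producing degree-$2$ classes are actually deployed, and where one must show enough degree-$2$ classes exist, not merely that the ones obtained are not inseparability artefacts), and (iii) the finiteness arguments guaranteeing the integral closure of the image of $K^*$ is all of $\Z/p[t_1,\dots,t_\ell]$ rather than a proper integrally closed subring. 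Your uniqueness paragraph also quietly invokes ``the essential uniqueness of such embeddings'' at the field level, which is again part of what is being proved. None of this is wrong in direction, but as written it is a précis of \cite{AW} with its central theorems assumed; for the purposes of the present paper the correct move is the one the authors make, namely to cite the result.
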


\begin{theorem}
[Adams and Wilkerson {\cite[Proposition 1.10]{AW}}]
\label{AW2}
Let $K^*\to H^*(BT^k;\Z/p)$ and $L^*\to H^*(BT^\ell;\Z/p)$ be algebraic extensions of unstable algebras $K^*,L^*$ satisfying \eqref{condition}. Then for any map $K^*\to L^*$, there is a dotted filler in the commutative diagram
$$\xymatrix{K^*\ar[r]\ar[d]&H^*(BT^k;\Z/p)\ar@{-->}[d]\\
L^*\ar[r]&H^*(BT^\ell;\Z/p).}$$ 
\end{theorem}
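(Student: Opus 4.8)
The plan is to produce the dotted filler first as a homomorphism of fraction fields compatible with the Steenrod operations, and then to show that such a field map automatically carries $H^*(BT^k;\Z/p)$ into $H^*(BT^\ell;\Z/p)$. Since $K^*$ and $L^*$ are graded domains, the total Steenrod power is multiplicative and so extends uniquely to the fraction fields $\operatorname{Frac}(K^*)$, $\operatorname{Frac}(L^*)$, and $E_m:=\operatorname{Frac}(H^*(BT^m;\Z/p))$ by setting $P(1/x)=1/P(x)$; these become fields with Steenrod operations in the sense of Adams and Wilkerson, concentrated in even degrees by \eqref{condition}. As the extensions $K^*\to H^*(BT^k;\Z/p)$ and $L^*\to H^*(BT^\ell;\Z/p)$ are algebraic, $E_k$ is algebraic over $\operatorname{Frac}(K^*)$ and $E_\ell$ over $\operatorname{Frac}(L^*)$. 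I would first treat the case in which $\phi\colon K^*\to L^*$ is injective, and reduce the general case to it afterwards.

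Assume $\phi$ is injective. Then $K^*\xrightarrow{\phi}L^*\to E_\ell$ is injective and extends to an embedding $\iota\colon\operatorname{Frac}(K^*)\hookrightarrow E_\ell$ of fields with Steenrod operations. The crux is to extend $\iota$ along the algebraic extension $\operatorname{Frac}(K^*)\subseteq E_k$ to an embedding $E_k\hookrightarrow E_\ell$ commuting with all $P^i$. In ordinary field theory this would demand that the minimal polynomials of generators of $E_k$ split in $E_\ell$, which fails in general; the point is that compatibility with the Steenrod operations rigidifies matters. The essential input, which is the substance of the Adams--Wilkerson structure theory underlying Theorem \ref{AW1}, is that $E_\ell$ is closed under Steenrod-compatible algebraic extensions of its subfields of transcendence degree at most $\ell$: every such extension embeds into $E_\ell$ over the given subfield. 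Applying this to $\operatorname{Frac}(K^*)\subseteq E_k$ yields the desired $\Psi\colon E_k\hookrightarrow E_\ell$ extending $\iota$.

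It remains to descend $\Psi$ to the polynomial level. By construction $\Psi$ carries $K^*$ into $\phi(K^*)\subseteq H^*(BT^\ell;\Z/p)$, and $H^*(BT^k;\Z/p)$ is integral over $K^*$, so the image $\Psi(H^*(BT^k;\Z/p))$ consists of elements of $E_\ell$ integral over $H^*(BT^\ell;\Z/p)$. Since $H^*(BT^\ell;\Z/p)=\Z/p[t_1,\dots,t_\ell]$ is a polynomial ring, it is integrally closed in $E_\ell$, whence these elements already lie in $H^*(BT^\ell;\Z/p)$. Thus $\Psi$ restricts to a homomorphism $H^*(BT^k;\Z/p)\to H^*(BT^\ell;\Z/p)$, which preserves the Steenrod action because $\Psi$ does and makes the square commute by construction; this settles the injective case.

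For a general $\phi$, factor it as $K^*\twoheadrightarrow\phi(K^*)\hookrightarrow L^*$, noting that $\phi(K^*)$ again satisfies \eqref{condition}, and fix an algebraic extension $\phi(K^*)\to H^*(BT^{k'};\Z/p)$ from Theorem \ref{AW1}. The injective case applied to $\phi(K^*)\hookrightarrow L^*$ gives a filler $H^*(BT^{k'};\Z/p)\to H^*(BT^\ell;\Z/p)$. For the surjection $K^*\twoheadrightarrow\phi(K^*)$, whose kernel is a Steenrod-invariant homogeneous prime $\mathfrak p$, I would use going-up to choose a homogeneous prime $\mathfrak P$ of $H^*(BT^k;\Z/p)$ over $\mathfrak p$ that is invariant under the Steenrod operations; then $H^*(BT^k;\Z/p)/\mathfrak P$ is an algebraic extension of $\phi(K^*)$ of transcendence degree $k'$ and embeds into $H^*(BT^{k'};\Z/p)$ by the uniqueness in Theorem \ref{AW1}. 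Composing the quotient, this embedding, and the filler above yields the filler for $\phi$. The main obstacle is the rigidity statement of the second paragraph: it is precisely here that the unstable relation $P^{|x|/2}x=x^p$ and the resulting degree-$2$ generation must be exploited to suppress the usual abundance of competing field embeddings; a subsidiary difficulty is arranging the prime $\mathfrak P$ to be Steenrod-invariant.
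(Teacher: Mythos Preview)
The paper does not prove this statement: it is quoted verbatim from Adams and Wilkerson \cite[Proposition~1.10]{AW} and no argument is supplied. There is therefore nothing in the paper to compare your attempt against; you are effectively sketching the Adams--Wilkerson proof itself.

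As such a sketch, your outline is in the right spirit for the injective case: pass to fraction fields with their inherited Steenrod action, invoke the structure theory underlying Theorem~\ref{AW1} to extend along the algebraic extension, and descend to the polynomial level via integral closure. Note that the step ``$H^*(BT^k;\Z/p)$ is integral over $K^*$'' is the content of Corollary~\ref{dimension}; in the paper this is placed after Theorem~\ref{AW2} but its proof does not use Theorem~\ref{AW2}, so no circularity arises.

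The non-injective case is where your sketch has a real gap. You propose to lift the Steenrod-invariant prime $\mathfrak p=\ker\phi$ to a Steenrod-invariant prime $\mathfrak P$ of $H^*(BT^k;\Z/p)$ via going-up, but going-up in commutative algebra gives no control over invariance under operations that are not ring homomorphisms. One must work with the total Steenrod power (which is multiplicative) to even formulate invariance of ideals, and the existence of an invariant prime over a given invariant prime is a nontrivial assertion, not a consequence of the lying-over theorem. You flag this as a ``subsidiary difficulty,'' but it is in fact the crux of the non-injective case, and your sketch gives no mechanism for producing $\mathfrak P$. Adams and Wilkerson's own treatment avoids this issue by working throughout with the field-theoretic algebraic closure they construct, rather than by factoring through quotients of $H^*(BT^k;\Z/p)$.
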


We will need to convert algebraic extensions of unstable algebras to integral extensions in order to compare the Krull dimensions. This is possible by the modification of the results of Wilkerson \cite{W} in \cite{N} which we recall here. We set notation. Let $K^*$ be a graded algebra satisfying \eqref{condition} on which the mod $p$ Steenrod algebra acts (not necessarily unstably). As in \cite{N} (cf. \cite{W}), the unstable part of $K^*$ is defined by
$$\mathscr{U}(K^*):=\{x\in K^*\,\vert\,\mathscr{P}^r\mathscr{P}^Ix=0\text{ for }2r>\deg\mathscr{P}^I+|x|\text{ for any multi-index }I\},$$
where for a multiindex $I=(i_1,\ldots,i_s)$, we put $\mathscr{P}^I:=\mathscr{P}^{i_1}\cdots\mathscr{P}^{i_s}$. 

\begin{proposition}
[{Neusel \cite[Proposition 2.3]{N}}]
\label{unstable-part1}
Let $K^*,L^*$ be graded algebras satisfying \eqref{condition} on which the Steenrod algebra acts, not necessarily unstably. If $K^*\to L^*$ is an integral extension, then $\mathscr{U}(K^*)\to\mathscr{U}(L^*)$ is an integral extension.
\end{proposition}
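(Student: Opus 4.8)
The plan is to prove separately that (i) the Steenrod-algebra map $K^*\to L^*$ restricts to a well-defined injection $\mathscr{U}(K^*)\to\mathscr{U}(L^*)$ of graded algebras, and (ii) this injection is integral.

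Part (i) is formal. Recall from Wilkerson (the source of the definition of $\mathscr{U}$) that for a graded algebra $A$ satisfying \eqref{condition} with a Steenrod action, $\mathscr{U}(A)$ is a sub-$\mathscr{P}^*$-algebra of $A$; in particular it is closed under sums and products. Since $K^*\to L^*$ is a monomorphism commuting with the Steenrod operations, it carries an element satisfying the vanishing conditions defining $\mathscr{U}(K^*)$ to one satisfying the same conditions in $L^*$, so it restricts as claimed, and injectivity is inherited. Moreover, viewing $K^*\subseteq L^*$, equivariance of the inclusion shows that an element of $K^*$ lies in $\mathscr{U}(K^*)$ precisely when it lies in $\mathscr{U}(L^*)$; hence $\mathscr{U}(K^*)=K^*\cap\mathscr{U}(L^*)$, a fact I will reuse.

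For (ii) I would pass to graded fraction fields. As $K^*,L^*$ are finitely generated graded domains concentrated in even degrees, $F:=\operatorname{Frac}(K^*)\subseteq E:=\operatorname{Frac}(L^*)$ is a finite extension of graded fields (finite because $K^*\to L^*$ is integral), and the Steenrod action extends — compatibly, and uniquely in the sense of Adams and Wilkerson — to $F$, to $E$, and to an algebraic closure $\bar F\supseteq E$, using that the total Steenrod power $\sum_r\mathscr{P}^r t^r$ is a ring homomorphism and therefore passes to localizations and integral extensions. Uniqueness forces every $F$-automorphism of $\bar F$, and the $p$-th power operation together with its inverse on the perfect field $\bar F$, to commute with the $\mathscr{P}^*$-action (up to the standard index shift for the latter): conjugating the action by any of these produces another extension of the action on the subfield it fixes, which must agree with the original. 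Now let $y\in\mathscr{U}(L^*)\subseteq E$ be homogeneous; after replacing $y$ by a suitable $p$-th power, which stays in $\mathscr{U}(L^*)$ since $\mathscr{U}(L^*)$ is a subalgebra, we may assume its minimal polynomial over $F$ is separable, so its roots $y=y_1,\dots,y_n\in\bar F$ are the $F$-automorphism images of $y$. By the previous sentence each $y_i$ again satisfies the conditions defining the unstable part, i.e. $y_i\in\mathscr{U}(\bar F)$; since $\mathscr{U}(\bar F)$ is closed under sums and products, the elementary symmetric functions of $y_1,\dots,y_n$ lie in $\mathscr{U}(\bar F)\cap F=\mathscr{U}(F)$. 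Thus the minimal polynomial of $y$ over $F$ is monic with all coefficients in $\mathscr{U}(F)$.

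It remains to replace $\mathscr{U}(F)$ by $\mathscr{U}(K^*)$, and this descent is where I expect the genuine work to lie. The coefficients just produced are also integral over $K^*$, hence lie in $\mathscr{U}(F)\cap\widetilde{K}^*$, where $\widetilde{K}^*$ is the integral closure of $K^*$ in $F$; one checks that $\widetilde{K}^*$ again satisfies \eqref{condition} — finitely generated over the perfect field $\Z/p$ by Noether's finiteness theorem, connected, and concentrated in even degrees because $F$ is — and that the Steenrod action restricts to it, since $\widetilde K^*$ is stable under the total power operation. Then $\mathscr{U}(F)\cap\widetilde K^*=\mathscr{U}(\widetilde K^*)$ by the argument of (i), so $y$ is integral over $\mathscr{U}(\widetilde K^*)$; the argument of the previous paragraph applied to the integral extension $\widetilde K^*\to L^*$, whose source is integrally closed, in fact places those coefficients directly in $\mathscr{U}(\widetilde K^*)$, so that $\mathscr{U}(\widetilde K^*)\to\mathscr{U}(L^*)$ is integral. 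It remains only to see that $\mathscr{U}(K^*)\to\mathscr{U}(\widetilde K^*)$ is integral, where $K^*\to\widetilde K^*$ is module-finite with unchanged fraction field; this last point is the delicate one, coming down to identifying $\mathscr{U}(F)$ with the fraction field of $\mathscr{U}(K^*)$ in the integrally closed case and, more generally, to keeping the extended $\mathscr{P}^*$-structures coherent along the tower $K^*\subseteq\widetilde K^*\subseteq F\subseteq\bar F$. Once that is in hand, transitivity of integral extensions completes the proof.
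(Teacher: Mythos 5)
First, a caveat on the comparison itself: the paper contains no proof of this statement. It is imported verbatim from Neusel \cite[Proposition 2.3]{N}, so there is no internal argument to measure your proposal against; I can only judge it on its own terms. Your part (i) is correct and standard, and the Galois-theoretic core of part (ii) --- extend the action to $F=Q(K^*)$, to $E=Q(L^*)$ and to a closure, use uniqueness of the extension to make conjugation preserve the unstable conditions, and conclude that the minimal polynomial of a suitable $p^k$-th power of $y\in\mathscr{U}(L^*)$ is monic with coefficients in $\mathscr{U}(F)$ intersected with the integral closure $\widetilde{K}^*$ of $K^*$ in $F$ --- is a plausible plan in the Adams--Wilkerson style.

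There are, however, two genuine gaps. The decisive one is the final descent: you reduce everything to showing that $\mathscr{U}(K^*)\to\mathscr{U}(\widetilde{K}^*)$ is an integral extension, label this ``the delicate one,'' and stop. That remaining step is itself an instance of the proposition being proved (applied to the integral extension $K^*\to\widetilde{K}^*$), so as written the argument is circular exactly where the content lies: nothing you have established rules out $\mathscr{U}(K^*)$ being much smaller than $\mathscr{U}(\widetilde{K}^*)$, since the vanishing conditions defining $\mathscr{U}(K^*)$ are tested inside $K^*$ while the coefficients you produced live only in the normalization. The escape route you gesture at, identifying $\mathscr{U}(F)$ with $Q(\mathscr{U}(K^*))$, is not available from the paper's toolkit either: Proposition \ref{unstable-part2} is stated for \emph{unstable} $K^*$, whereas here the action on $K^*$ is expressly not assumed unstable. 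The second gap is foundational: you appeal to Adams--Wilkerson for the existence and uniqueness of the extension of the $\mathscr{P}^*$-action to $\bar F$ and hence for Galois equivariance, but \cite{AW} works in the category of unstable algebras; for a not-necessarily-unstable action you must construct the extension yourself (e.g.\ by Hensel's lemma applied to the total power series on the separable closure, which your $p$-th power reduction does make sufficient) and prove it unique before you may assert that the conjugates $y_i$ lie in $\mathscr{U}(\bar F)$. Until both points are supplied, the proposal is not a proof.
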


As in \cite{W}, if a graded algebra has an action of the Steenrod algebra, then its ring of fractions inherits the action.
In the next proposition, the field of fractions $Q(K^*)$ of an unstable algebra $K^*$ is considered as the ``homogeneous portion'' in \cite[Proposition 2.3]{W} or the ``field of fractions'' in \cite{N}.

\begin{proposition}
[{Neusel \cite[Theorem 2.4]{N}, Wilkerson \cite[Proposition 3.3]{W}}]
\label{unstable-part2}
For a graded algebra $K^*$, let $\overline{K^*}$ denote the integral closure of $K^*$ in its field of fractions $Q(K^*)$. 
\begin{enumerate}
\item If $K^*$ is an unstable algebra satisfying \eqref{condition}, then
$$\mathscr{U}(Q(K^*))=\overline{K^*}.$$
\item If $K^*$ is an unstable algebra which is a UFD satisfying \eqref{condition}, then
$$\mathscr{U}(Q(K^*))=K^*.$$
\end{enumerate}
\end{proposition}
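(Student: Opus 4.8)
The plan is to obtain part (2) as an immediate consequence of part (1): a unique factorization domain is integrally closed in its field of fractions, so for a UFD $K^*$ satisfying \eqref{condition} one has $\overline{K^*}=K^*$, and substituting this into part (1) gives $\mathscr{U}(Q(K^*))=K^*$. Thus the whole content lies in part (1), the equality $\mathscr{U}(Q(K^*))=\overline{K^*}$, which I would prove by establishing the two inclusions separately. As a preliminary I would record the ambient structure: since $K^*$ is unstable, the total operation $\mathscr{P}=\sum_{i\ge 0}\mathscr{P}^i$ (and $\mathrm{Sq}=\sum_i\mathrm{Sq}^i$ when $p=2$) is a genuine ring endomorphism of $K^*$ by the Cartan formula, and it extends to $Q(K^*)$ by $\mathscr{P}(a/b)=\mathscr{P}(a)/\mathscr{P}(b)$; the integral closure $\overline{K^*}$ is graded and carried into itself by this multiplicative operation, hence is closed under each $\mathscr{P}^i$.

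For the inclusion $\overline{K^*}\subseteq\mathscr{U}(Q(K^*))$ I would invoke Theorem \ref{AW1}, which supplies an algebraic extension of unstable algebras $K^*\to H^*(BT^\ell;\Z/p)$ and hence a Steenrod-equivariant inclusion $Q(K^*)\hookrightarrow L:=Q(H^*(BT^\ell;\Z/p))$. Any $\phi\in\overline{K^*}$ is integral over $K^*$, hence over the larger ring $H^*(BT^\ell;\Z/p)$; since the latter is a polynomial algebra, it is a UFD and so integrally closed in $L$, whence $\phi\in H^*(BT^\ell;\Z/p)$. As $H^*(BT^\ell;\Z/p)$ is a genuine unstable algebra, $\phi$ satisfies the full instability condition, so $\phi\in\mathscr{U}(Q(K^*))$. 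This direction is therefore formal once the embedding is in hand.

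The reverse inclusion $\mathscr{U}(Q(K^*))\subseteq\overline{K^*}$ is the heart of the matter, and I would split it into two steps. The first is the base computation $\mathscr{U}(L)=H^*(BT^\ell;\Z/p)$: a rational function in $t_1,\dots,t_\ell$ all of whose iterated Steenrod operations obey the instability bound must be a polynomial, the point being that a nontrivial denominator $g$ produces, upon expanding $1/\mathscr{P}(g)$ against the leading term of $\mathscr{P}(g)$, a nonzero operation above half the degree. Because $Q(K^*)\hookrightarrow L$ is Steenrod-equivariant, the vanishing conditions defining instability are detected identically in both, so $\mathscr{U}(Q(K^*))=Q(K^*)\cap\mathscr{U}(L)=Q(K^*)\cap H^*(BT^\ell;\Z/p)$. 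The second step is then to show that every such element is integral over $K^*$, i.e. that $Q(K^*)\cap H^*(BT^\ell;\Z/p)$ is integral over $K^*$.

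This last step is where I expect the real difficulty, and it is precisely Wilkerson's \cite[Proposition 3.3]{W}, with the bookkeeping for non-unstable Steenrod actions furnished by \cite{N}. The subtlety is that being algebraic over $K^*$ is automatic here while integrality is not: for instance $\Z/p[x]\subseteq\Z/p[x,x^{-1}]$ is algebraic of the same Krull dimension yet not integral, the obstruction being precisely an unbounded denominator. The instability hypothesis is exactly what forbids this. The mechanism I would follow is a Frobenius/denominator-bounding argument: using that $K^*$ is unstable, one shows that for $\theta=a/b\in\mathscr{U}(Q(K^*))$ the denominator $b$ is forced to divide a fixed element assembled from the top operations, so that $\mathscr{U}(Q(K^*))$ is contained in a finitely generated $K^*$-submodule of $Q(K^*)$ and is therefore integral over $K^*$. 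Combining the two inclusions gives $\mathscr{U}(Q(K^*))=\overline{K^*}$, and together with the opening remark this settles both parts.
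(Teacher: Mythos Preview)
The paper does not supply its own proof of this proposition: it is quoted directly from Neusel \cite{N} and Wilkerson \cite{W}, and the only accompanying text is the Remark that, despite a minor discrepancy between the two definitions of the unstable part, the argument of \cite[Proposition~3.3]{W} goes through unchanged. So there is no in-paper proof to compare against; what you have written is a reconstruction of the Wilkerson--Neusel argument rather than an alternative to anything the authors do.

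Your outline is broadly faithful to that original argument, with one structural difference worth flagging. You route the inclusion $\overline{K^*}\subseteq\mathscr{U}(Q(K^*))$ through the Adams--Wilkerson embedding (Theorem~\ref{AW1}), and then for the reverse inclusion you reduce to the base case $\mathscr{U}(Q(H^*(BT^\ell;\Z/p)))=H^*(BT^\ell;\Z/p)$ together with the integrality of $Q(K^*)\cap H^*(BT^\ell;\Z/p)$ over $K^*$. This works within the paper's logical framework, since Theorem~\ref{AW1} is taken as a black box, but it is more circuitous than the source: Wilkerson's original argument for \cite[Proposition~3.3]{W} is entirely intrinsic to $K^*$ and $Q(K^*)$ and does not invoke any embedding into a polynomial algebra. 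The denominator-bounding mechanism you sketch in your final paragraph \emph{is} the whole proof in Wilkerson's hands, applied directly to an arbitrary unstable $K^*$; the detour through $H^*(BT^\ell;\Z/p)$ is unnecessary, and indeed historically the dependence runs the other way (Adams--Wilkerson use Wilkerson's integral-closure results as input). Your derivation of part~(2) from part~(1) via the fact that UFDs are integrally closed is correct and is exactly how the two parts are related.
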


\begin{remark}
The definitions of the unstable parts in \cite{W} is slightly different from ours \cite{N}, but we can prove Proposition \ref{unstable-part2} in the same way as Proposition 3.3 of \cite{W}.
\end{remark}

\begin{corollary}
\label{dimension}
Let $K^*$ be an unstable algebra satisfying \eqref{condition}. Then any algebraic extension $K^*\to H^*(BT^\ell;\Z/p)$ is an integral extension.
\end{corollary}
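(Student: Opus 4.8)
The plan is to exhibit $H^*(BT^\ell;\Z/p)$ as the integral closure of $K^*$ in a suitable field and then read off integrality from Proposition~\ref{unstable-part2}. Write $B=H^*(BT^\ell;\Z/p)$, and regard $K^*$ as a subalgebra of $B$ via the given algebraic extension. Being a polynomial algebra, $B$ is a UFD satisfying \eqref{condition}, it is integrally closed in its field of fractions $Q(B)$, and $\mathscr{U}(Q(B))=B$ by Proposition~\ref{unstable-part2}(2). Let $C$ be the integral closure of $K^*$ in $Q(B)$. First, $C\subseteq B$: any element of $Q(B)$ integral over $K^*$ is integral over $B$, hence lies in $B$ since $B$ is integrally closed in $Q(B)$. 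From $C\subseteq B$ it follows that $C$ is connected with $C^{\mathrm{odd}}=0$; it is a finitely generated domain, because $Q(B)$ is a finite extension of $Q(K^*)$ (algebraic by hypothesis, and finitely generated over $\Z/p$ as the fraction field of a finitely generated algebra) and the integral closure of a finitely generated domain over a field in a finite field extension is module-finite; and it is stable under the Steenrod operations, since integral closures of Steenrod-stable domains inside Steenrod-stable fields are Steenrod-stable (see \cite{W}). Being a Steenrod-stable subalgebra of the unstable algebra $B$, $C$ is itself an unstable algebra, and hence an unstable algebra satisfying \eqref{condition}.

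Next I would check that $Q(C)=Q(B)$. Given $\beta\in Q(B)$, the algebraicity of $K^*\to B$ gives a relation $a_n\beta^n+\cdots+a_0=0$ with $a_i\in K^*$ and $a_n\neq 0$; multiplying through by $a_n^{n-1}$ shows that $a_n\beta$ is integral over $K^*$, so $a_n\beta\in C$, and therefore $\beta=(a_n\beta)/a_n\in Q(C)$ because $a_n\in K^*\subseteq C$. Now $C$ is integrally closed in $Q(C)=Q(B)$ (integral closures are integrally closed), so applying Proposition~\ref{unstable-part2}(1) to $C$ gives $\mathscr{U}(Q(C))=\overline{C}=C$. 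Combining this with $Q(C)=Q(B)$ and the identity $\mathscr{U}(Q(B))=B$ from the first paragraph yields $C=\mathscr{U}(Q(C))=\mathscr{U}(Q(B))=B$. Hence $B=H^*(BT^\ell;\Z/p)$ is the integral closure of $K^*$ in $Q(B)$; in particular every element of $H^*(BT^\ell;\Z/p)$ is integral over $K^*$, i.e. $K^*\to H^*(BT^\ell;\Z/p)$ is an integral extension.

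The only step that I expect to require genuine input rather than formal bookkeeping is the Steenrod-stability of the integral closure $C$ inside $Q(B)$; once that is granted, Proposition~\ref{unstable-part2} together with standard commutative algebra does the rest. If one prefers to avoid quoting it, one can first pass to $\overline{K^*}=\mathscr{U}(Q(K^*))$, which is Steenrod-stable by construction and satisfies $\overline{K^*}\hookrightarrow\mathscr{U}(Q(B))=B$, and then rerun the argument with $\overline{K^*}$ in place of $K^*$; but the pertinent integral closure must still be recognised as Steenrod-stable, so this only relocates the point rather than removing it.
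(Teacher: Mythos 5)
Your argument is correct, but it takes a genuinely different route from the paper's. The paper's proof is a three-line application of the two quoted propositions: since $Q(K^*)\to Q(H^*(BT^\ell;\Z/p))$ is an algebraic, hence integral, extension of fields, Proposition~\ref{unstable-part1} makes $\mathscr{U}(Q(K^*))\to\mathscr{U}(Q(H^*(BT^\ell;\Z/p)))$ integral, and Proposition~\ref{unstable-part2} identifies this with $\overline{K^*}\to H^*(BT^\ell;\Z/p)$; composing with the integral extension $K^*\to\overline{K^*}$ finishes the argument (the paper then invokes the uniqueness in Theorem~\ref{AW1} to identify the composite with the given map, a step you do not need). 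You instead bypass Proposition~\ref{unstable-part1} entirely and prove the stronger statement that $B=H^*(BT^\ell;\Z/p)$ \emph{is} the integral closure $C$ of $K^*$ in $Q(B)$, by verifying that $C$ is an unstable algebra satisfying \eqref{condition} with $Q(C)=Q(B)$ and then squeezing $C=\mathscr{U}(Q(C))=\mathscr{U}(Q(B))=B$ out of Proposition~\ref{unstable-part2}. The price is two external inputs the paper never quotes: Noether's finiteness of integral closure (to get finite generation of $C$) and Wilkerson's theorem that integral closures of Steenrod-stable domains are Steenrod-stable. The latter is precisely the technical content that the paper outsources to Neusel's Proposition~\ref{unstable-part1}, so, as you yourself observe, your variant relocates rather than removes the hard point; what it buys in exchange is the sharper conclusion identifying $H^*(BT^\ell;\Z/p)$ with the integral closure, which the corollary does not actually require. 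Both proofs are sound; the paper's is the more economical given the results it has already set up in Section~2.
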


\begin{proof}
Put $L^*:=H^*(BT^\ell;\Z/p)$. Then $Q(K^*)\to Q(L^*)$ is an algebraic extension of fields, so it is an integral extension. Then it follows from Proposition \ref{unstable-part1} that $\mathscr{U}(Q(K^*))\to\mathscr{U}(Q(L^*))$ is an integral extension. Now by Proposition \ref{unstable-part2},  we have $\mathscr{U}(Q(K^*))=\overline{K^*}$ and $\mathscr{U}(Q(L^*))=L^*$, so we get a sequence of integral extensions
$$K^*\to\overline{K^*}\to L^*.$$
Then the composite is an integral extension which can be identified with the original algebraic extension $K^*\to L^*$ by the uniqueness of Theorem \ref{AW1}, completing the proof.
\end{proof}

We next recall Aguad\'e's calculation of the $T$-functor. Let $G$ be a compact connected Lie group such that $H_*(G;\Z)$ is $p$-torsion free. Then the mod $p$ cohomology of $BG$ is isomorphic to the invariant ring of the action of the Weyl group of $G$ on $H^*(BT;\Z/p)$, where $T$ is a maximal torus of $G$. Let $V$ be the canonical elementary abelian $p$-subgroup in $T$, and let $j\colon BV\to BG$ denote the canonical map. We denote the $T$-functor associated with $V$ by $T_V$.

\begin{proposition}
[Aguad\'e {\cite[Proposition 4]{A}}, Lannes {\cite[Proposition 3.4.6]{L}}]
\label{A}
There is an isomorphism
$$T_V^{j^*}H^*(BG;\Z/p)\cong H^*(BT;\Z/p)$$
where the left hand side denotes the component of $j^*$ in $T_VH^*(BG;\Z/p)$.
\end{proposition}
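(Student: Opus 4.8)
The plan is to compute $T_V H^*(BT;\Z/p)$ from the formal properties of Lannes's $T$-functor, then invoke $H^*(BG;\Z/p)\cong H^*(BT;\Z/p)^W$ to pass to Weyl invariants and extract the component indexed by $j^*$. Throughout I would use freely that $T_V$ is exact (in particular commutes with finite limits and finite products) and is monoidal for the tensor product of unstable algebras.

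First I would compute $T_V H^*(BT;\Z/p)$. Since $H^*(BT;\Z/p)\cong H^*(BU(1);\Z/p)^{\otimes\ell}$ with $\ell=\operatorname{rank} G$ and $T_V$ is monoidal, it suffices to compute $T_V$ on a single factor $H^*(BU(1);\Z/p)$. Lannes's computation gives
\[
T_V H^*(BU(1);\Z/p)\cong\prod_{\lambda\in\mathrm{Hom}(V,\Z/p)}H^*(BU(1);\Z/p),
\]
which one also sees from $BU(1)^\wedge_p\simeq K(\Z_p^\wedge,2)$ and $\mathrm{map}(BV,K(\Z_p^\wedge,2))\simeq\prod_{\lambda}K(\Z_p^\wedge,2)$: the components are indexed by $H^2(BV;\Z_p^\wedge)\cong\mathrm{Hom}(V,\Z/p)$ and each is equivalent to $K(\Z_p^\wedge,2)$ because $H^1(BV;\Z_p^\wedge)=0$; the crux is that the Bockstein forces an unstable algebra map $\Z/p[x]\to H^*(BV;\Z/p)$ to send the degree-$2$ generator to a genuine character of $V$. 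Taking the $\ell$-fold tensor product, and identifying $\mathrm{Hom}(V,\Z/p)^{\ell}$ with the finite set $\mathrm{Hom}(V,T)$, I obtain
\[
T_V H^*(BT;\Z/p)\cong\prod_{\phi\in\mathrm{Hom}(V,T)}H^*(BT;\Z/p),
\]
on which the Weyl group $W$ acts by permuting the factors through its action on $\mathrm{Hom}(V,T)$.

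Next, using $H^*(BG;\Z/p)=H^*(BT;\Z/p)^W$ (valid since $G$ has no $p$-torsion) together with the fact that $T_V$ commutes with the finite limit $(-)^W$, I would compute
\[
T_V H^*(BG;\Z/p)=T_V\bigl(H^*(BT;\Z/p)^W\bigr)\cong\bigl(T_V H^*(BT;\Z/p)\bigr)^W\cong\prod_{[\phi]\in\mathrm{Hom}(V,T)/W}\Bigl(\prod_{\psi\in W\phi}H^*(BT;\Z/p)\Bigr)^W.
\]
Under this splitting, the factor indexed by the orbit of $\phi$ is the component of $T_V H^*(BG;\Z/p)$ at the algebra map $H^*(BG;\Z/p)=H^*(BT;\Z/p)^W\hookrightarrow H^*(BT;\Z/p)\xrightarrow{\phi^*}H^*(BV;\Z/p)$. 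Since $j\colon BV\to BG$ factors through $BT$ via the inclusion $V\hookrightarrow T$, the map $j^*$ corresponds to the orbit of $\phi_0:=(V\hookrightarrow T)$. I would then use that $V=T[p]$ and that the same hypothesis forces $W$ to act faithfully on $V$, so that $\operatorname{Stab}_W(\phi_0)$ is trivial and $W$ permutes the $|W|$ factors over the orbit $W\phi_0$ freely; since the $W$-invariants of a free permutation action on $|W|$ isomorphic copies of an algebra form a single copy, this gives $T_V^{j^*}H^*(BG;\Z/p)\cong H^*(BT;\Z/p)$.

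The step I expect to be the main obstacle is matching the two component decompositions: verifying that Lannes's abstract splitting of $T_V H^*(BG;\Z/p)$ over unstable algebra maps into $H^*(BV;\Z/p)$ is taken to the decomposition obtained above by $W$-invariance of $T_V H^*(BT;\Z/p)$, and that the orbit of $\phi_0$ is precisely the summand hitting $j^*$; entangled with this is the Lie-theoretic point that $W$ acts faithfully on $V$, equivalently that no root of $G$ is divisible by $p$ in the character lattice of $T$ (equivalently $C_G(V)=T$). As a more geometric alternative, which I would keep in reserve, one applies Lannes's theorem directly to $BG$ to get $T_V H^*(BG;\Z/p)\cong H^*(\mathrm{map}(BV,(BG)^\wedge_p);\Z/p)$, then uses the Dwyer--Zabrodsky/Notbohm description $\mathrm{map}(BV,(BG)^\wedge_p)\simeq\coprod_{[\rho]}(BC_G(\rho V))^\wedge_p$ and identifies the component of $j$ with $(BC_G(V))^\wedge_p=(BT)^\wedge_p$, again via the same root-divisibility fact.
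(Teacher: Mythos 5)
The paper does not prove Proposition \ref{A}; it is quoted from Aguad\'e and Lannes without proof, so there is no internal argument to compare yours against. Your outline is essentially the argument of the cited sources: compute $T_VH^*(BT;\Z/p)\cong\prod_{\phi\in\mathrm{Hom}(V,T)}H^*(BT;\Z/p)$ from the rank-one case and multiplicativity, pass to $W$-invariants using exactness of $T_V$ (which does commute with the finite limit $(-)^W$), and identify the $j^*$-component with the orbit of the inclusion $\phi_0\colon V\hookrightarrow T$, which contributes a single copy of $H^*(BT;\Z/p)$ because the orbit is free. The two points you flag as the real work --- matching Lannes's component decomposition indexed by unstable algebra maps $H^*(BG;\Z/p)\to H^*(BV;\Z/p)$ (i.e.\ by $\mathrm{Hom}(V,T)/W$) with the orbit decomposition of $(T_VH^*(BT;\Z/p))^W$, and the freeness of the orbit of $\phi_0$ --- are indeed where the content lies, and your reserve route via Dwyer--Zabrodsky is the other standard proof.

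One correction is needed: the faithfulness of the $W$-action on $V=T[p]$ does \emph{not} follow from $H_*(G;\Z)$ being $p$-torsion free, as you assert. For $G=\SU(2)$ and $p=2$ the homology is torsion free, yet $W=\Z/2$ acts trivially on $T[2]=\{\pm 1\}$, $C_G(V)=G$, and the component of $j^*$ is $H^*(B\SU(2);\Z/2)$ rather than $H^*(BT;\Z/2)$ --- so the statement itself must be read with $p$ odd (or with the faithfulness hypothesis) built in. What actually gives faithfulness is Minkowski's lemma: the kernel of $\operatorname{GL}_\ell(\Z)\to\operatorname{GL}_\ell(\Z/p)$ is torsion free for $p\ge 3$, so the finite group $W$ injects into $\operatorname{GL}(V)$; correspondingly, for $p$ odd no root is $p$-divisible in the character lattice (it pairs to $2$ with its coroot), which is what gives $C_G(V)=T$ in your alternative argument. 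Since the paper only invokes the proposition for $p>2n_\ell\ge 4$, this costs nothing in the application, but the justification as you wrote it is the one step that would fail.
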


%%%%%%%%%%%%%%%%%%
%  Section 3
%%%%%%%%%%%%%%%%%%

\section{Proof of Theorem \ref{main}}

The outline of the proof of Theorem \ref{main} is as follows. Let $G$ be a compact connected Lie group such that $\pi_{d-1}(G)$ is of infinite order, and let $P$ be a principal $G$-bundle over $S^d$ which is classified by $\alpha\in\pi_{d-1}(G)$. Note that the infiniteness of $\pi_{d-1}(G)$ is equivalent to that there are infinitely many $P$. We first prove that given a positive $n$, if $\alpha$ is divisible by $p^N$ for $N$ large, then $\G(P)_{(p)}$ and $\G(S^d\times G)_{(p)}$ have the same $A_n$-type. We next prove that if $\G(P)_{(p)}$ and $\G(S^d\times G)_{(p)}$ have the same $A_n$-type for $n,p$ large and $G$ simple, then $\alpha$ is divisible by $p$. Then since there are infinitely many primes, we conclude that there are infinitely many $A_\infty$-types of $\G(P)$ when $P$ ranges over all principal $G$-bundles over $S^d$. In the both parts, the obstruction theoretic description of the $A_n$-triviality of the adjoint bundle of $P$ as in \cite{KK} is fundamental, and in the second part, the results of unstable algebras in Section 2 underlie technical arguments.

To recall the above mentioned result of Kono and the first author \cite{KK}, we briefly recall fiberwise objects appearing in it. A space $X$ equipped with a map $p\colon X\to B$ is called a {\it fiberwise space} over a space $B$. The given map $p$ is called the {\it projection}. A map $X\to Y$ between fiberwise spaces over $B$ is called a {\it fiberwise map} if it commutes with projections. For a fiberwise map $m\colon X\times_BX\to X$ from the fiber product of $X$ and itself and a section $s\colon B\to X$, the triple $X=(X,m,s)$ is called a {\it fiberwise topological monoid} if $m\circ(1\times m)=m\circ(m\times 1)$ as maps $X\times_BX\times_BX\to X$ and $m\circ(1,s\circ p)=m\circ(s\circ p,1)=1$ as maps $X\to X$. We say a fiberwise topological monoid $X$ is a {\it fiberwise topological group} if there is a fiberwise map $\nu\colon X\to X$ such that $m\circ(1,\nu)=m\circ(\nu,1)=s\circ p$. Like $A_n$-maps and $A_n$-equivalences between topological monoids, {\it fiberwise $A_n$-maps} and {\it fiberwise $A_n$-equivalences} between fiberwise topological monoids are defined.

Let $G$ be a compact connectd Lie group, and let $P$ be a principal $G$-bundle over a base $K$.  Define $\adjoint P$ by
$$\adjoint P=(P\times G)/\sim$$
where $(x,g)\sim(xh,h^{-1}gh)$ for $x\in P$ and $g,h\in G$. Then the projection $\adjoint P\to K$ is a fiber bundle over $K$ with fiber $G$ which we call the {\it adjoint bundle} of $P$. As in \cite{KK}, $\adjoint P$ is a fiberwise topological group over $K$, and then in particular, the space of all sections $\Gamma(\adjoint P)$ becomes a topological group by the pointwise multiplication. Let $\adjoint P_{(p)}$ denote the fiberwise $p$-localization of $\adjoint P$. By the standard Moore path technique, we may assume that $\adjoint P_{(p)}$ is a fiberwise topological monoid as well as $G_{(p)}$. It is shown in \cite{AB} that there is a natural isomorphism of topological groups
$$\G(P)\cong\Gamma(\adjoint P)$$
from which we get an $A_\infty$-equivalence 
$$\G(P)_{(p)}\simeq\Gamma(\adjoint P_{(p)}).$$
We connect the triviality of $\adjoint P$ with the gauge group of $P$. The restriction to the fiber at the basepoint yields a homomorphism of topological groups
$$\pi\colon\G(P)\to G$$
which is identified with the map $\Gamma(\adjoint P)\to G$ substituting the basepoint of $K$. If $\pi$ has a right homotopy inverse $\sigma$, the map
$$K\times G\to\adjoint P,\quad(x,g)\mapsto\sigma(g)(x)$$
is a fiberwise homotopy equivalence, where we regard $\sigma(g)$ as a section of $\adjoint P$ by the above isomorphism. In \cite{KK} this is generalized in the context of fiberwise $A_n$-types. For a given $A_\infty$-map, we call its right homotopy inverse which is an $A_n$-map by an $A_n$-section.

\begin{theorem}
[\cite{KK}]
\label{A_k-section}
The adjoint bundle $\adjoint P$ is fiberwise $A_n$-equivalent to the trivial bundle $K\times G$ if and only if $\pi$ has an $A_n$-section.
\end{theorem}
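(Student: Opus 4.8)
The plan is to recast the statement through the exponential law. Since $\mathscr{G}(P)\cong\Gamma(\operatorname{ad} P)$ as topological groups and $K$ is locally compact Hausdorff, a fiberwise map $\phi\colon K\times G\to\operatorname{ad} P$ over $K$ is the same datum as a map $\sigma\colon G\to\Gamma(\operatorname{ad} P)=\mathscr{G}(P)$ via $\sigma(g)(x)=\phi(x,g)$, and under this correspondence $\pi\circ\sigma$ becomes the restriction $\phi|_{x_0}$ of $\phi$ to the fiber over the basepoint, once $(\operatorname{ad} P)_{x_0}$ is identified with $G$ as in the text. The decisive observation is that the correspondence is compatible with the relevant multiplications: the product on $\Gamma(\operatorname{ad} P)$ and the product on the trivial fiberwise monoid $K\times G$ are both computed pointwise over $K$, so the higher homotopies defining an $A_n$-map $\sigma$ correspond, fiber by fiber, to the higher homotopies defining a fiberwise $A_n$-map $\phi$, and vice versa.

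Concretely, for the ``if'' direction I would take an $A_n$-section $\sigma$ of $\pi$ with structure maps $h_i\colon I^{i-1}\times G^i\to\Gamma(\operatorname{ad} P)$, $h_1=\sigma$, set $\phi(x,g)=\sigma(g)(x)$, and define fiberwise maps $\tilde h_i\colon I^{i-1}\times K\times G^i\to\operatorname{ad} P$ by $\tilde h_i(t;x;g_1,\dots,g_i)=h_i(t;g_1,\dots,g_i)(x)$. Evaluating the two defining relations for the $h_i$ at a point $x\in K$ turns the case $t_j=0$ into ``$\tilde h_{i-1}$ applied to the fiberwise product $(x,g_j)(x,g_{j+1})=(x,g_jg_{j+1})$'' and the case $t_j=1$ into ``the fiberwise product $\tilde h_j\cdot\tilde h_{i-j}$'', which are exactly the defining relations of a fiberwise $A_n$-map; hence $\phi$ is a fiberwise $A_n$-map with $\phi|_{x_0}=\pi\circ\sigma\simeq\operatorname{id}_G$. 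To upgrade $\phi$ to a fiberwise \emph{equivalence} I would invoke Dold's theorem: $\operatorname{ad} P$ and $K\times G$ are fibrations over the connected base $K$, $\phi$ is a fiberwise map, and its restriction to the basepoint fiber is a homotopy equivalence; since $\operatorname{ad} P$ is locally trivial and $K$ is connected, $\phi$ restricts to a homotopy equivalence on every fiber, so $\phi$ is a fiberwise homotopy equivalence, and therefore a fiberwise $A_n$-equivalence.

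For the ``only if'' direction I would run the dictionary backwards: from a fiberwise $A_n$-equivalence $\phi\colon K\times G\to\operatorname{ad} P$ form $\sigma(g)(x)=\phi(x,g)$, so that the fiberwise structure maps of $\phi$ yield structure maps for $\sigma$ and $\sigma$ is an $A_n$-map. The composite $e:=\pi\circ\sigma=\phi|_{x_0}$ is simultaneously a homotopy equivalence and an $A_n$-map (a restriction of a fiberwise $A_n$-map to a fiber is an $A_n$-map), hence an $A_n$-equivalence; by Proposition \ref{A_n-property} its homotopy inverse $e'$ is again an $A_n$-map, so $\sigma\circ e'$ is an $A_n$-map with $\pi\circ(\sigma\circ e')=e\circ e'\simeq\operatorname{id}_G$, i.e.\ an $A_n$-section of $\pi$.

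I expect the genuine content to be concentrated in two places. The first is the bookkeeping verifying that the boundary conditions of the $\tilde h_i$ coincide with those of the $h_i$ --- this is routine but must be done with the conventions straight, together with the continuity and exponential-law points, for which one uses that $K$ and $G$ are locally compact. The second, and the only input external to Section 2, is Dold's theorem guaranteeing that a fiberwise map between fibrations over a connected base which restricts to a homotopy equivalence on one fiber is a fiberwise homotopy equivalence; everything else is formal manipulation of $A_n$-structures.
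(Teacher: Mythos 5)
The paper does not actually prove this statement --- it is imported from \cite{KK} --- so there is no in-paper argument to measure you against; your proof is correct and is essentially the expected one (and, up to presentation, the one in \cite{KK}): the exponential-law dictionary identifying $A_n$-maps $G\to\Gamma(\adjoint P)$ with fiberwise $A_n$-maps $K\times G\to\adjoint P$ (the pointwise multiplications matching on both sides), together with Dold's theorem to promote a fiberwise $A_n$-map that is an equivalence on one fibre of a connected base to a fiberwise homotopy equivalence, hence a fiberwise $A_n$-equivalence by definition. The only step worth making explicit is in the ``only if'' direction: the given fiberwise $A_n$-equivalence may a priori go from $\adjoint P$ to $K\times G$, and you need the fiberwise analogue of Proposition \ref{A_n-property}(2) (which the paper records, from \cite{KK}) to replace it by an equivalence in the direction your dictionary requires.
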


Let $P_\alpha$ denote the principal $G$-bundle classified by a map $\alpha\colon K\to BG$. As in \cite{G} and \cite{AB}, there is a natural homotopy equivalence
$$B\G(P_\alpha)\simeq\mathrm{map}(K,BG;\alpha)$$
where $\mathrm{map}(X,Y;f)$ denotes the space of maps from $X$ to $Y$ which are (freely) homotopic to $f$. Evaluating at the basepoint of $K$, we get the fibration
$$\omega\colon\mathrm{map}(K,BG;\alpha)\to BG$$
with fiber $\mathrm{map}_0(K,BG;\alpha)$, where $\mathrm{map}_0(X,Y;f)$ is the space of basepoint preserving maps from $X$ to $Y$ which are freely homotopic to $f$.
By construction, the map $\pi\colon\G(P_\alpha)\to G$ is identified with $\Omega\omega$ through the above homotopy equivalence. Using this identification, we show an obstruction theoretic description of existence of an $A_n$-section of $\pi$.
Let $j_n\colon\Sigma G\to P^nG$ and $i_1\colon\Sigma G\to BG$ be the canonical maps.

\begin{lemma}
[cf. \cite{KK}]
\label{obstruction}
The map $\pi\colon\G(P_\alpha)\to G$ has an $A_n$-section if and only if there is a map $K\times P^nG\to BG$ satisfying the homotopy commutative diagram
$$
\xymatrix{
K\vee \Sigma G\ar[r]^(.6){\alpha\vee i_1}\ar[d]^{\rm incl}&BG\ar@{=}[d]\\
K\times P^nG\ar[r]&BG.
}
$$
\end{lemma}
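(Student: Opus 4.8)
The plan is to deduce the lemma formally from Proposition~\ref{P^nX}, the exponential law $\mathrm{map}(K\times P^nG,BG)\cong\mathrm{map}(P^nG,\mathrm{map}(K,BG))$, and the two identifications $B\G(P_\alpha)\simeq\mathrm{map}(K,BG;\alpha)$ and $\pi\simeq\Omega\omega$ recalled above. I would fix a based representative of $\alpha\colon K\to BG$ and take it as the basepoint of $\mathrm{map}(K,BG;\alpha)\simeq B\G(P_\alpha)$; with this choice the canonical map $\Sigma\G(P_\alpha)\to B\G(P_\alpha)$ is identified with the evaluation $\mathrm{ev}\colon\Sigma\Omega\,\mathrm{map}(K,BG;\alpha)\to\mathrm{map}(K,BG;\alpha)$, and $\omega$ is identified with $B\pi$ (using that $\pi$ is a homomorphism and that the equivalence $\G(P_\alpha)\simeq\Omega\,\mathrm{map}(K,BG;\alpha)$ is multiplicative). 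I will repeatedly use the suspension--loop adjunction, in particular the triangle identity $\mathrm{ev}\circ\Sigma\hat g\simeq g$ for $g\colon\Sigma Z\to W$ with adjoint $\hat g\colon Z\to\Omega W$.

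For the forward direction, given an $A_n$-section $\sigma\colon G\to\G(P_\alpha)$ of $\pi$, Proposition~\ref{P^nX} provides a map $\bar\sigma\colon P^nG\to B\G(P_\alpha)$ with $\bar\sigma\circ j_n\simeq i_1^{\G(P_\alpha)}\circ\Sigma\sigma$, where $i_1^{\G(P_\alpha)}\colon\Sigma\G(P_\alpha)\to B\G(P_\alpha)$ is the canonical map. I would take $f\colon K\times P^nG\to BG$ to be the adjoint of $\bar\sigma$ under the exponential law. Then $f|_{K\times\{\ast\}}=\bar\sigma(\ast)=\alpha$, while $f|_{\{\ast_K\}\times P^nG}=\omega\circ\bar\sigma$, so restricting further along $j_n$ gives $f|_{\{\ast_K\}\times P^nG}\circ j_n=\omega\circ\bar\sigma\circ j_n\simeq\omega\circ i_1^{\G(P_\alpha)}\circ\Sigma\sigma$. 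Replacing $\omega$ by $B\pi$ and invoking naturality of the canonical maps $\Sigma(-)\to B(-)$ for the homomorphism $\pi$, this is $\simeq i_1\circ\Sigma(\pi\sigma)\simeq i_1$ because $\pi\sigma\simeq\mathrm{id}_G$. After gluing the two homotopies over the wedge point --- legitimate since $K\vee\Sigma G\hookrightarrow K\times P^nG$ is a cofibration --- the map $f$ restricts to $\alpha\vee i_1$ on $K\vee\Sigma G$, as required.

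For the converse, given $f\colon K\times P^nG\to BG$ restricting to $\alpha\vee i_1$ on $K\vee\Sigma G$, let $\bar\sigma\colon P^nG\to\mathrm{map}(K,BG)$ be its adjoint. Since $P^nG$ is connected and $f|_{K\times\{\ast\}}\simeq\alpha$, the map $\bar\sigma$ factors through the component $\mathrm{map}(K,BG;\alpha)\simeq B\G(P_\alpha)$; I would then set $\sigma\colon G\to\G(P_\alpha)$ to be the map $G\to\Omega\,\mathrm{map}(K,BG;\alpha)$ adjoint to $\bar\sigma\circ j_n$. The triangle identity gives $i_1^{\G(P_\alpha)}\circ\Sigma\sigma\simeq\bar\sigma\circ j_n$, so Proposition~\ref{P^nX} applied with this same $\bar\sigma$ shows that $\sigma$ is an $A_n$-map. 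Moreover $\pi\sigma=\Omega\omega\circ\sigma$, whose adjoint $\Sigma G\to BG$ is $\omega\circ\bar\sigma\circ j_n=f|_{\{\ast_K\}\times P^nG}\circ j_n\simeq i_1$ by hypothesis; since $i_1$ is itself the adjoint of the standard equivalence $G\to\Omega BG$, we conclude $\pi\sigma\simeq\mathrm{id}_G$, so $\sigma$ is an $A_n$-section of $\pi$.

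I expect no single step to be genuinely hard; the main obstacle is bookkeeping. One must verify that the equivalences $B\G(P_\alpha)\simeq\mathrm{map}(K,BG;\alpha)$ and $\G(P_\alpha)\simeq\Omega\,\mathrm{map}(K,BG;\alpha)$ are compatible with the canonical maps $\Sigma(-)\to B(-)$ and with $\pi\simeq\Omega\omega$ --- so that $\omega$ may be replaced by $B\pi$ in the computations above --- and one must track basepoints carefully enough to glue the homotopies produced on $K\times\{\ast\}$ and on $\{\ast_K\}\times\Sigma G$ over the wedge point. Given the framework of \cite{AB,G,KK} this is routine, so the lemma follows formally from Proposition~\ref{P^nX} and the exponential law.
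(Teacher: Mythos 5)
Your proof is correct and takes essentially the same route as the paper's: identify $\pi$ with $\Omega\omega$, apply Proposition \ref{P^nX} to convert the existence of an $A_n$-section into the existence of the pair $(s,\bar s)$, and then pass through the exponential law to obtain the map $K\times P^nG\to BG$. The paper compresses all of your basepoint and compatibility bookkeeping into the single phrase ``by taking adjoint,'' so your write-up is just a more explicit version of the same argument.
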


\begin{proof}
As above, we identify $\pi$ with $\Omega\omega$.
By Lemma \ref{P^nX}, there is an $A_n$-section of $\Omega\omega$ if and only if there are maps $s\colon G\to\Omega\mathrm{map}(K,BG;\alpha)$ and $\bar{s}\colon P^nG\to\mathrm{map}(K,BG;\alpha)$ satisfying $\Omega\omega\circ s\simeq 1_G$ and the homotopy commutative diagram
$$\xymatrix{\Sigma G\ar[r]^(.3){\Sigma s}\ar[d]^{j_n}&\Sigma\Omega\mathrm{map}(K,BG;\alpha)\ar[d]\\
P^nG\ar[r]^(.35){\bar{s}}&\mathrm{map}(K,BG;\alpha).}$$
Thus by taking adjoint, we obtain the desired result.
\end{proof}

\begin{remark}
We here remark that Theorem \ref{A_k-section} and Lemma \ref{obstruction} hold if we localize at $p$.
\end{remark}

We first show an implication of the divisibility of a map $\alpha$ on the triviality of the adjoint bundle $\adjoint P_\alpha$. From now on we set $K=S^d$. Then we have $\alpha\in\pi_d(BG)\cong\pi_{d-1}(G)$. 

\begin{lemma}
\label{trivial}
For any $f\colon X\to BG$, the connecting map $\partial\colon\pi_*(BG)\to\pi_{*-1}(\mathrm{map}_0(X,BG;f))$ of the evaluation fibration is trivial after rationalization.
\end{lemma}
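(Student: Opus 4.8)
The plan is to analyze the evaluation fibration
$$\mathrm{map}_0(X,BG;f)\to\mathrm{map}(X,BG;f)\xrightarrow{\ \omega\ }BG$$
and show that its connecting homomorphism vanishes rationally. First I would recall that rationally $BG$ is a product of even-dimensional Eilenberg--MacLane spaces, since $G$ is a compact connected Lie group and $H^*(BG;\mathbb{Q})$ is a polynomial algebra on generators in even degrees. In particular $BG_{\mathbb{Q}}$ is an H-space (indeed a double loop space after rationalization, being a product of $K(\mathbb{Q},2k)$'s). The key point is that $\omega$ admits a section: the constant-map inclusion $BG\to\mathrm{map}(X,BG)$ picks out constant maps, but these need not land in the component of $f$; instead one uses that for an H-space $Y$, the evaluation fibration $\mathrm{map}(X,Y;f)\to Y$ always has a section, given by $y\mapsto(x\mapsto y\cdot f(x))$ using a basepoint-normalized version of the multiplication. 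Applying this with $Y=BG_{\mathbb{Q}}$ shows the rationalized evaluation fibration has a section, hence its connecting map is zero.

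Concretely, the steps are: (1) rationalize everything; since $X$ need not be finite, I would either assume $X$ is a finite complex (which is the only case used, $X=S^d$) so that rationalization commutes with mapping spaces, or phrase the argument on homotopy groups directly — the rationalized connecting map $\partial\otimes\mathbb{Q}\colon\pi_*(BG)\otimes\mathbb{Q}\to\pi_{*-1}(\mathrm{map}_0(X,BG;f))\otimes\mathbb{Q}$ is the connecting map of the fibration obtained by mapping into $BG_{\mathbb{Q}}$. (2) Observe $BG_{\mathbb{Q}}$ is a homotopy-commutative, homotopy-associative H-space with strict unit (take a CW model), so the formula $\mu_f\colon BG_{\mathbb{Q}}\to\mathrm{map}(X,BG_{\mathbb{Q}};f)$, $y\mapsto(x\mapsto y\cdot f(x))$, is well-defined and continuous, lands in the component of $f$ (as $\mu_f$ restricted to the unit gives $f$ and $BG_{\mathbb{Q}}$ is connected), and satisfies $\omega\circ\mu_f\simeq\mathrm{id}$ since evaluating at the basepoint and using the unit gives back $y$. (3) A fibration with a section has split long exact sequence, so the connecting map is trivial.

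Alternatively — and perhaps more cleanly for the write-up — I would argue entirely on homotopy groups: $\partial\colon\pi_k(BG)\to\pi_{k-1}(\mathrm{map}_0(X,BG;f))$ fits into the exact sequence of the fibration, and it suffices to show each class in the image of $\pi_{k-1}(\mathrm{map}_0)$ under the inclusion into $\pi_{k-1}(\mathrm{map}(X,BG;f))$ is hit, i.e. that $\omega_*$ is surjective rationally; this follows from the section above. One can also identify $\partial$ with the map sending $\xi\in\pi_k(BG)=[S^k,BG]$ to the Samelson-type Whitehead product / composition involving $f$, which is detected in $\pi_*\otimes\mathbb{Q}$ by cup products and graded commutators in $H^*(BG;\mathbb{Q})$; since the relevant product structure on a rational product of Eilenberg--MacLane spaces is trivial on such secondary operations, $\partial\otimes\mathbb{Q}=0$.

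The main obstacle I anticipate is the point-set issue in step (2): making the H-multiplication on $BG_{\mathbb{Q}}$ strictly unital and continuous so that $\mu_f$ is honestly defined, and checking $\mu_f$ really lands in the correct path component and that $\omega\circ\mu_f$ is homotopic (not equal) to the identity. This is entirely standard — replace $BG_{\mathbb{Q}}$ by a topological monoid or a well-pointed H-space model — but needs a sentence of care. Everything else (rationalization of $BG$, the long exact sequence, splitting from a section) is formal.
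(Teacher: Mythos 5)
Your proof is correct and follows essentially the same route as the paper: both arguments rationalize, note that $BG_{(0)}$ is an H-space (a product of Eilenberg--MacLane spaces), and deduce that the evaluation fibration splits rationally so that $\omega_*$ is surjective and hence $\partial$ vanishes. The paper states the splitting as a fiberwise homotopy equivalence $\mathrm{map}(X,BG;f)_{(0)}\simeq BG_{(0)}\times\mathrm{map}_0(X,BG;f)_{(0)}$ rather than writing down the explicit section $y\mapsto(x\mapsto y\cdot f(x))$, but this is the same idea.
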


\begin{proof}
Since $G$ is a Lie group, $BG_{(0)}$ is a product of Eilenberg-MacLane spaces, so it is in particular an H-space. Then $\mathrm{map}(X,BG;f)_{(0)}$ is fiberwise homotopy equivalent to $BG_{(0)}\times\mathrm{map}_0(X,BG;f)_{(0)}$, implying that the map $\omega_*\colon\pi_*(\mathrm{map}(X,BG;f))_{(0)}\to\pi_*(BG)_{(0)}$ is surjective. Thus the proof is done.
\end{proof}

\begin{proposition}
\label{divisibility-1}
For given $n$, there is an integer $N$ such that if $\alpha\in\pi_d(BG)$ is divisible by $p^N$, then the fiberwise $p$-localized adjoint bundle $(\adjoint P_\alpha)_{(p)}$ is fiberwise $A_n$-equivalent to the trivial bundle $S^d\times G_{(p)}$.
\end{proposition}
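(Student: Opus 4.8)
The plan is to reduce to the extension problem of Lemma \ref{obstruction} and then to analyse which classes $\alpha$ solve it by a group‑theoretic argument. By the $p$‑local forms of Theorem \ref{A_k-section} and Lemma \ref{obstruction} (cf.\ the Remark after Lemma \ref{obstruction}), $(\operatorname{ad}P_\alpha)_{(p)}$ is fiberwise $A_n$‑equivalent to $S^d\times G_{(p)}$ precisely when $\alpha\vee i_1\colon S^d\vee\Sigma G\to BG$ extends over $S^d\times P^nG$ after $p$‑localization. Since $i_1$ factors as $\Sigma G\xrightarrow{j_n}P^nG\xrightarrow{\iota}BG$ with $\iota$ the canonical inclusion, it is enough to extend $\alpha\vee\iota\colon S^d\vee P^nG\to BG$ over $S^d\times P^nG$ $p$‑locally. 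I would therefore set
$$E:=\{\alpha\in\pi_d(BG)_{(p)}\mid(\alpha\vee\iota)_{(p)}\text{ extends over }(S^d\times P^nG)_{(p)}\}$$
and aim to show that $E$ is a subgroup of $\pi_d(BG)_{(p)}$ of finite index. Granting this, if $N$ is the exponent of the finite group $\pi_d(BG)_{(p)}/E$ then $p^N\pi_d(BG)_{(p)}\subseteq E$, so any $\alpha$ divisible by $p^N$ lies in $E$ and the proposition follows.

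To see that $E$ is a subgroup, first $0\in E$, an extension of $0\vee\iota$ being $S^d\times P^nG\xrightarrow{\mathrm{pr}}P^nG\xrightarrow{\iota}BG$. If $\alpha\in E$ is extended by $h$, then $h\circ(\delta\times\mathrm{id})$ extends $(-\alpha)\vee\iota$, where $\delta\colon S^d\to S^d$ has degree $-1$, so $-\alpha\in E$. For closure under addition, given $\alpha,\beta\in E$ with extensions $h,h'$, I would use the homotopy extension property of $(S^d\times P^nG,S^d\vee P^nG)$ to arrange that $h,h'$ restrict to $\iota$ on $\{*\}\times P^nG$ and to $\alpha,\beta$ on $S^d\times\{*\}$; then, under the identification $(S^d\vee S^d)\times P^nG\cong(S^d\times P^nG)\cup_{\{*\}\times P^nG}(S^d\times P^nG)$, glue $h$ and $h'$ to a map $\hat h$ on $(S^d\vee S^d)\times P^nG$ and precompose with $\nabla\times\mathrm{id}$ for the pinch map $\nabla\colon S^d\to S^d\vee S^d$; the resulting map extends $(\alpha+\beta)\vee\iota$, so $\alpha+\beta\in E$.

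To see that $E$ has finite index, note that $P^nG$ is a finite complex (as $G$ is a compact Lie group and $n<\infty$), so $(S^d\times P^nG,S^d\vee P^nG)$ is a finite relative $CW$ pair. Filtering it by relative skeleta $S^d\vee P^nG=Y_0\subset\cdots\subset Y_r=S^d\times P^nG$ and setting $E_i:=\{\alpha\mid(\alpha\vee\iota)_{(p)}\text{ extends over }(Y_i)_{(p)}\}$, one gets nested subgroups with $E_0=\pi_d(BG)_{(p)}$ and $E_r=E$ (the arguments of the previous paragraph respect this filtration, since $Y_i$ is preserved by the relevant self‑maps and the analogous decomposition holds for the skeleta of $(S^d\vee S^d)\times P^nG$). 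For $\alpha\in E_i$, obstruction theory assigns, modulo its indeterminacy, the obstruction $\bar o_{i+1}(\alpha)$ to extending over $Y_{i+1}$, an element of a finitely generated $\Z_{(p)}$‑module $M_{i+1}$ that is a subquotient of $\widetilde H^*(P^nG;\pi_*(BG)_{(p)})$, with $E_{i+1}=\{\alpha\in E_i\mid\bar o_{i+1}(\alpha)=0\}$; naturality of the obstruction cocycles together with the pinch construction makes $\bar o_{i+1}\colon E_i\to M_{i+1}$ a homomorphism. On the other hand $BG_{(0)}$ is a product of Eilenberg--MacLane spaces, hence an $H$‑space, so the evaluation fibration $\mathrm{map}(S^d,BG)_{(0)}\to BG_{(0)}$ is trivial (this is the mechanism behind Lemma \ref{trivial}); therefore $\alpha\vee\iota$ always extends after rationalization, so $\bar o_{i+1}(\alpha)$ lies in the torsion subgroup of $M_{i+1}$ for every $\alpha$. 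A homomorphism with torsion image into a finitely generated $\Z_{(p)}$‑module has image of finite exponent, so $[E_i:E_{i+1}]<\infty$; multiplying over $i$ gives $[\pi_d(BG)_{(p)}:E]<\infty$.

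The main obstacle I anticipate is the last step: making the skeletal obstruction bookkeeping precise, in particular checking that $\bar o_{i+1}$ is genuinely additive through the pinch construction and that the vanishing of the obstructions after rationalization yields a uniform bound (over $i$) on their $p$‑primary torsion. The first two steps are essentially formal once the $p$‑local forms of Theorem \ref{A_k-section} and Lemma \ref{obstruction} are available.
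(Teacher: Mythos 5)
Your overall strategy---reduce via Lemma \ref{obstruction} to extending $(\alpha\vee i_n)_{(p)}$ over $S^d_{(p)}\times P^nG_{(p)}$, and show that the classes $\alpha$ admitting such an extension form a finite-index subgroup $E$ of $\pi_d(BG)_{(p)}$---is the right one, and your first two steps are fine. The genuine gap is exactly where you anticipate it: in the skeletal induction, the higher obstruction $\bar o_{i+1}(\alpha)$ depends on the choice of extension over $Y_i$, so it is only defined modulo an indeterminacy that itself varies with $\alpha$; making $\bar o_{i+1}$ into a well-defined homomorphism on $E_i$ (so that its kernel is a subgroup of finite index) is not a routine naturality check, and as written the finite-index claim is not established. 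Moreover the rational vanishing you invoke gives, a priori, only that each obstruction is torsion, and extracting from this a single uniform divisibility bound $N$ requires exactly the finiteness you are trying to prove.

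The whole difficulty evaporates if you push the adjointness one step further, which is what the paper does. By adjointness, $E$ is precisely the image of $\omega_*\colon\pi_d(\mathrm{map}(P^nG_{(p)},BG_{(p)};i_n))\to\pi_d(BG_{(p)})$ for the evaluation fibration $\omega$ with fibre $\mathrm{map}_0(P^nG,BG;i_n)$. This makes $E$ a subgroup for free (no pinch-map argument needed), and the long exact sequence identifies $\pi_d(BG)_{(p)}/E$ with the image of the connecting map $\partial\colon\pi_d(BG)_{(p)}\to\pi_{d-1}(\mathrm{map}_0(P^nG,BG;i_n))_{(p)}$. Lemma \ref{trivial} says $\partial$ is rationally trivial, so its image is a torsion quotient of the finitely generated $\Z_{(p)}$-module $\pi_d(BG)_{(p)}$, hence a finite $p$-group; taking $N$ to be its exponent gives $\partial(\alpha_{(p)})=0$, i.e.\ $\alpha_{(p)}\in E$, whenever $p^N\mid\alpha$. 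So your subgroup $E$ and your appeal to the rational splitting of the evaluation fibration are exactly the paper's ingredients; what is missing is the observation that the single exact sequence of the evaluation fibration replaces, and repairs, the entire cell-by-cell obstruction bookkeeping.
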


\begin{proof}
It is sufficient to show that if $\alpha$ is divisible by $p^N$ for some $N$,  then there is a map $\mu\colon S^d_{(p)}\times P^nG_{(p)}\to BG_{(p)}$ which restricts to $\alpha\vee(i_n)_{(p)}\colon S^d_{(p)}\vee P^nG_{(p)}\to BG_{(p)}$ up to homotopy, where $i_n\colon P^nG\to BG$ is the canonical inclusion. Indeed for $i_n\circ j_n\simeq i_1$, existence of $\mu$ implies that $(\adjoint P_\alpha)_{(p)}$ is fiberwise $A_n$-equivalent to $K\times G_{(p)}$ by Lemma \ref{obstruction}. By adjointness the map $\mu$ exists if and only if $\alpha_{(p)}$ is in the image of the induced map $\omega_*\colon\pi_d(\mathrm{map}(P^nG_{(p)},BG_{(p)};i_n))\to\pi_d(BG_{(p)})$, which is equivalent to that $\partial(\alpha_{(p)})=0$ for the connecting map $\partial\colon\pi_d(BG)_{(p)}\to\pi_{d-1}(\mathrm{map}_0(P^nG,BG;i_n))_{(p)}$. By Lemma \ref{trivial} this connecting map $\partial$ is trivial if we rationalize, so its image is of finite order. Then if $\alpha$ is divisible by $p^N$ for $N$ large, we have $\partial(\alpha_{(p)})=0$, completing the proof.
\end{proof}

We next show an implication of the triviality of $\adjoint P_\alpha$ on the divisibility of the classifying map $\alpha$. By the Hopf theorem, we have a rational homotopy equivalence
$$G\simeq_{(0)}S^{2n_1-1}\times\cdots\times S^{2n_\ell-1}$$ 
for $n_1\le\cdots\le n_\ell$, where $n_1,\ldots,n_\ell$ is called the type of $G$. For $p>n_\ell$, there is also a $p$-local homotopy equivalence
\begin{equation}
\label{p-regular}
G\simeq_{(p)}S^{2n_1-1}\times\cdots\times S^{2n_\ell-1}.
\end{equation}
Hereafter we always assume $p>n_\ell$. Notice that the mod $p$ cohomology of $BG$ is given by
$$H^*(BG;\Z/p)\cong\Z/p[y_{2n_1},\ldots,y_{2n_\ell}],\quad|y_{2n_i}|=2n_i.$$
Here we regard that $y_{2n_i}$ corresponds to the sphere $S^{2n_i-1}$ in \eqref{p-regular}, so even if $n_i=n_j$ for some $i,j$, we distinguish $y_{2n_i}$ and $y_{2n_j}$. The following replacement of maps  between unstable algebras allows us to apply the results in Section 2.

\begin{lemma}
\label{lift}
If $n\ge n_\ell+p-1$ and $K^*$ is an unstable algebra, then for any map $\phi\colon H^*(BG;\Z/p)^*\to K^*\otimes H^*(P^nG;\Z/p)$ of unstable algebras, there is a dotted filler in the commutative diagram of unstable algebras
$$\xymatrix{H^*(BG;\Z/p)\ar@{-->}[r]\ar@{=}[d]&K^*\otimes H^*(BG;\Z/p)\ar[d]^{1\otimes i_n^*}\\
H^*(BG;\Z/p)\ar[r]^(.43)\phi&K^*\otimes H^*(P^nG;\Z/p)}$$
\end{lemma}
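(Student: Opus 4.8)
The plan is to lift $\phi$ one polynomial generator at a time and then check that the resulting algebra homomorphism respects the Steenrod operations; the hypotheses $p>n_\ell$ and $n\ge n_\ell+p-1$ enter only in the Steenrod part. I would begin by comparing $H^*(BG;\Z/p)$ and $H^*(P^nG;\Z/p)$ through $i_n^*$ in low degrees. The Dold--Lashof filtration satisfies $P^{k+1}G/P^kG\simeq\Sigma^{k+1}(G^{\wedge(k+1)})$, and since $\widetilde H^*(G;\Z/p)$ is concentrated in degrees $\ge 2n_1-1$, the reduced mod $p$ cohomology of $\Sigma^{k+1}(G^{\wedge(k+1)})$ is concentrated in degrees $\ge 2(k+1)n_1$. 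Running over $k\ge n$ and passing to the colimit, $\widetilde H^b(BG/P^nG;\Z/p)=0$ for $b\le 2(n+1)n_1-1$, so the cofibre sequence $P^nG\hookrightarrow BG\to BG/P^nG$ shows that $i_n^*$ is an isomorphism for $b\le 2(n+1)n_1-2$ and a monomorphism for $b=2(n+1)n_1-1$; tensoring with $K^*$, the same holds degreewise for $1\otimes i_n^*$. Since $n\ge n_\ell+p-1$ and $n_1\ge1$, the monomorphism range contains every degree $\le 2(n_\ell+p-1)$, so in particular $1\otimes i_n^*$ is an isomorphism in each degree $2n_i$ and a monomorphism in each degree $2n_i+2(p-1)$ and $2n_i+1$.

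As $H^*(BG;\Z/p)=\Z/p[y_{2n_1},\dots,y_{2n_\ell}]$ is the free graded-commutative algebra on the classes $y_{2n_i}$, I would take $\widetilde\phi(y_{2n_i})$ to be the unique preimage of $\phi(y_{2n_i})$ under $1\otimes i_n^*$ (which exists because $1\otimes i_n^*$ is an isomorphism in degree $2n_i$) and extend multiplicatively to an algebra homomorphism $\widetilde\phi\colon H^*(BG;\Z/p)\to K^*\otimes H^*(BG;\Z/p)$. Then $(1\otimes i_n^*)\circ\widetilde\phi$ and $\phi$ are algebra maps agreeing on generators, hence equal, so the square commutes. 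It remains to show that $\widetilde\phi$ commutes with the mod $p$ Steenrod algebra $\mathcal A$.

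This is the step I expect to be the real obstacle: a priori $\mathscr{P}^I y_{2n_i}$ can sit far above the isomorphism range of $i_n^*$, so one cannot verify $\mathcal A$-linearity of $\widetilde\phi$ operation by operation. The way around it uses $p>n_\ell$. By the Cartan formula the set $S=\{x:\widetilde\phi(\mathscr{P}^j x)=\mathscr{P}^j\widetilde\phi(x)\text{ for all }j,\ \widetilde\phi(\beta x)=\beta\widetilde\phi(x)\}$ is a subalgebra of $H^*(BG;\Z/p)$, so it is enough to prove $y_{2n_i}\in S$ for every $i$; then $S=H^*(BG;\Z/p)$, whence $\widetilde\phi$ commutes with every $\mathscr{P}^j$ and with $\beta$, hence with every admissible monomial, hence with all of $\mathcal A$. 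On a generator, instability gives $\mathscr{P}^j y_{2n_i}=0$ for $j>n_i$; and for $1\le j\le n_i\le n_\ell<p$ the Adem relation $\mathscr{P}^1\mathscr{P}^k=(k+1)\mathscr{P}^{k+1}$ yields $(\mathscr{P}^1)^j=j!\,\mathscr{P}^j$ in $\mathcal A$ with $j!$ invertible mod $p$, and the same identities act on the unstable algebra $K^*\otimes H^*(BG;\Z/p)$. Therefore $y_{2n_i}\in S$ as soon as $\widetilde\phi$ commutes with $\mathscr{P}^1$ and with $\beta$ on the single element $y_{2n_i}$.

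Finally I would check these two compatibilities. The elements $\widetilde\phi(\mathscr{P}^1 y_{2n_i})$ and $\mathscr{P}^1\widetilde\phi(y_{2n_i})$ both lie in degree $2n_i+2(p-1)\le 2(n_\ell+p-1)$, and $1\otimes i_n^*$ carries either of them to $\mathscr{P}^1\phi(y_{2n_i})$ (using $(1\otimes i_n^*)\circ\widetilde\phi=\phi$ together with the $\mathcal A$-linearity of $\phi$ and of $1\otimes i_n^*$); since $1\otimes i_n^*$ is injective in that degree they agree. The identical argument in degree $2n_i+1$, using $\beta y_{2n_i}=0$, gives $\beta\widetilde\phi(y_{2n_i})=0=\widetilde\phi(\beta y_{2n_i})$. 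Hence $y_{2n_i}\in S$ for all $i$, $\widetilde\phi$ is a map of unstable algebras, and it is the required filler.
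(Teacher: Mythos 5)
Your proposal is correct and follows essentially the same route as the paper: show $1\otimes i_n^*$ is an isomorphism in degrees $\le 2n$ (the paper does this via the $(n+1)$-fold join fiber of $i_n$ rather than the Dold--Lashof filtration quotients, to the same effect), define the filler on the polynomial generators by inverting it, and verify Steenrod compatibility on the generators using the degree bound $2n_i+2(p-1)\le 2n$ for $\beta,\mathscr{P}^1$ and instability together with $p>n_\ell$ for the remaining operations. One small repair: ``commutes with $\mathscr{P}^1$ on the single element $y_{2n_i}$'' does not by itself yield commutation with $(\mathscr{P}^1)^j$ on $y_{2n_i}$; you should first observe that, $\mathscr{P}^1$ being a derivation, the set of elements on which $\widetilde\phi$ commutes with $\mathscr{P}^1$ alone is a subalgebra containing all the $y_{2n_i}$, hence is everything, after which the iteration via $(\mathscr{P}^1)^j=j!\,\mathscr{P}^j$ goes through --- a point the paper's own proof also leaves implicit.
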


\begin{proof}
The homotopy fiber of $i_n$ is the join of $(n+1)$-copies of $G$ whose mod $p$ cohomology is trivial in dimension $\le 2n$ since $G$ is connected. Then we see that $i_n$ is an isomorphism in mod $p$ cohomology of dimension $\le 2n$, implying that the dotted filler, say $\varphi$, exists as a maps of graded algebras. The map $\varphi$ actually respects the Steenrod operations. Indeed, for $n\ge n_\ell+p-1$, 
$$\varphi(\theta y_{2n_i})=(1\otimes i_n^*)^{-1}\phi(\theta y_{2n_i})=\theta(1\otimes i_n^*)^{-1}\phi(y_{2n_i})=\theta\varphi(y_{2n_i})\quad\text{for}\quad\theta=\beta,\mathscr{P}^1$$
and since we are assuming $p>n_\ell$, 
$$\varphi(\mathscr{P}^{p^k}y_{n_i})=0=\mathscr{P}^{p^k}\varphi(y_{n_i})\quad\text{for}\quad k\ge 1.$$
Then since the Steenrod algebra is generated by $\beta,\mathscr{P}^{p^k}$ for $k\ge 0$, the proof is completed.
\end{proof}

\begin{proposition}
\label{divisibility-2}
Suppose $d\ge 4$ and $\alpha$ is of infinite order. If the fiberwise $p$-localized adjoint bundle $(\adjoint P_\alpha)_{(p)}$ is fiberwise $A_n$-equivalent to $S^d\times G_{(p)}$ for $n=n_\ell+p-1$, then $\alpha$ is divisible by $p$.
\end{proposition}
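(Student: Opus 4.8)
The plan is to convert the fiberwise $A_n$-triviality into a statement in mod $p$ cohomology, extract from it a twisted derivation whose non-vanishing measures the $p$-divisibility of $\alpha$, and kill that derivation by transporting it along the restriction to a maximal torus with Lannes--Aguad\'e theory. To begin, by the $p$-local forms of Theorem \ref{A_k-section} and Lemma \ref{obstruction} (the Remark following Lemma \ref{obstruction}), the hypothesis yields a map $\mu\colon S^d_{(p)}\times P^nG_{(p)}\to BG_{(p)}$ with $\mu|_{S^d\vee\Sigma G}\simeq\alpha\vee i_1$, where $\Sigma G\hookrightarrow P^nG$ is $j_n$. Taking mod $p$ cohomology gives a map of unstable algebras $\mu^*\colon H^*(BG;\Z/p)\to H^*(S^d;\Z/p)\otimes H^*(P^nG;\Z/p)$, and since $n=n_\ell+p-1$, Lemma \ref{lift} applied with $K^*=H^*(S^d;\Z/p)$ provides a lift $\varphi\colon H^*(BG;\Z/p)\to H^*(S^d;\Z/p)\otimes H^*(BG;\Z/p)$ with $(1\otimes i_n^*)\varphi=\mu^*$. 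Because $\alpha$ has infinite order we have $d=2n_{i_0}$ for some $i_0$, so $d$ is even, $d\geq 4$, and $H^*(S^d;\Z/p)=\Z/p[u]/(u^2)$ with $|u|=d$; write $\varphi=\varphi_0+u\varphi_1$ accordingly. Then $\varphi_0\colon H^*(BG;\Z/p)\to H^*(BG;\Z/p)$ is a map of unstable algebras and $\varphi_1\colon H^*(BG;\Z/p)\to H^*(BG;\Z/p)$ is a degree $-d$ map which is a $\varphi_0$-twisted derivation and, since $\mathscr{P}^r u=0$ for $r>0$, commutes with the Steenrod operations.

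Next I record the boundary conditions. From $\mu|_{S^d\times *}\simeq\alpha$ one gets $(1\otimes\epsilon)\varphi=\alpha^*$, where $\epsilon$ is the augmentation of $H^*(P^nG;\Z/p)$ pulled through $i_n^*$; evaluated on $H^d(BG;\Z/p)$ this says precisely that $\varphi_1|_{H^d(BG;\Z/p)}$ is the Kronecker pairing with $\alpha$. As $H_*(BG;\Z)$ is $p$-torsion free, $\alpha$ is divisible by $p$ as soon as this pairing vanishes, so it suffices to prove $\varphi_1=0$. On the other hand, from $\mu|_{*\times P^nG}\circ j_n\simeq i_1=i_n\circ j_n$ one gets $j_n^*i_n^*\varphi_0=j_n^*i_n^*$; since $j_n^*i_n^*$ annihilates decomposables and sends the generators $y_{2n_i}$ (which are distinguished even when $n_i=n_j$, according to the sphere convention) to linearly independent suspensions, this forces $\varphi_0\equiv\mathrm{id}$ modulo decomposables. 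Moreover, realizing $\varphi_0$ inside $H^*(BT;\Z/p)$ by Theorems \ref{AW1} and \ref{AW2} as a degree-preserving automorphism which must normalize the Weyl group yet fix each $y_{2n_i}$ to leading order, one upgrades this to $\varphi_0=\mathrm{id}$.

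For the core step, apply Lannes' $T$-functor $T_V$, with $V$ the canonical maximal elementary abelian $p$-subgroup of $T$, to $\varphi$. Since $\mathrm{map}_*(BV,S^d)$ is contractible, $T_V H^*(S^d;\Z/p)\cong H^*(S^d;\Z/p)$, so $T_V\varphi$ is a map $T_V H^*(BG;\Z/p)\to H^*(S^d;\Z/p)\otimes T_V H^*(BG;\Z/p)$; because $\varphi_0=\mathrm{id}$ it preserves the $j^*$-component, which by Proposition \ref{A} is $H^*(BT;\Z/p)$. Through the adjunction unit this produces a map of algebras $\Phi\colon H^*(BT;\Z/p)\to H^*(S^d;\Z/p)\otimes H^*(BT;\Z/p)$ satisfying $\Phi\circ\rho=(1\otimes\rho)\circ\varphi$, where $\rho\colon H^*(BG;\Z/p)\to H^*(BT;\Z/p)$ is the maximal-torus restriction. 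But $H^*(BT;\Z/p)$ is generated in degree $2$ while $|u|=d>2$, so $\Phi(t_i)\in H^2(BT;\Z/p)$ has no $u$-component; writing $\Phi=\Phi_0+u\Phi_1$, the twisted derivation $\Phi_1$ vanishes on each $t_i$ and hence $\Phi_1=0$. Comparing $u$-components in $\Phi\circ\rho=(1\otimes\rho)\circ\varphi$ gives $\rho\circ\varphi_1=\Phi_1\circ\rho=0$, and since $\rho$ is injective (cf.\ Corollary \ref{dimension} and Theorem \ref{AW1}) we conclude $\varphi_1=0$. Thus $\alpha$ pairs trivially with $H^d(BG;\Z/p)$ and is divisible by $p$.

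The main obstacle is the passage through $T_V$: one must verify carefully that $T_V\varphi$ preserves the $j^*$-component, so that Proposition \ref{A} applies and $\rho$ is the full maximal-torus restriction (hence injective), and that the resulting $\Phi$ genuinely completes the square with $\rho$; this is where the hypotheses ``$G$ simple'' and ``$p>n_\ell$'', together with the normalizer argument giving $\varphi_0=\mathrm{id}$, are all used. It is tempting instead to extend $\varphi_1$ directly along the Adams--Wilkerson embedding as a twisted derivation, but that extension only lands in the field of fractions of $H^*(BT;\Z/p)$, where a derivation of negative degree need not vanish; staying inside the polynomial ring — equivalently, using the unstable-part results of Propositions \ref{unstable-part1} and \ref{unstable-part2} — is the real point.
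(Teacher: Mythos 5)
Your argument follows essentially the same route as the paper's: lift $\mu^*$ through $1\otimes i_n^*$ via Lemma \ref{lift}, show that the projection $\varphi_0$ of the lift to $H^*(BG;\Z/p)$ agrees with the identity on indecomposables and hence is an isomorphism, and then transport the resulting map through Lannes' $T$-functor and Aguad\'e's computation $T_V^{j^*}H^*(BG;\Z/p)\cong H^*(BT;\Z/p)$ to exploit that $H^*(BT;\Z/p)$ is generated in degree $2<d$, which kills the $u$-component and contradicts non-divisibility. The one overreach is the claim $\varphi_0=\mathrm{id}$ on the nose: agreeing with the identity on indecomposables does not force an unstable-algebra endomorphism to be the identity (decomposable correction terms are possible), and the sketched Weyl-group-normalizer argument does not close this gap; however, the point is moot, since (exactly as in the paper) it suffices that $\varphi_0$ be an isomorphism, whereupon one replaces $\varphi$ by $(1\otimes\varphi_0^{-1})\circ\varphi$ so that its projection is the identity, and your component bookkeeping for $T_V$ --- which genuinely needs $j^*\circ\varphi_0=j^*$, not just $\varphi_0$ an isomorphism --- then goes through; the only remaining difference is cosmetic, namely that the paper adjoints all the way down to a map $H^*(BT;\Z/p)\to H^*(S^d;\Z/p)$ instead of your $\Phi$ into $H^*(S^d;\Z/p)\otimes H^*(BT;\Z/p)$.
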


\begin{proof}
We show a contradiction by assuming $\alpha$ is not divisible by $p$. Since $\alpha$ is of infinite order, $d=2n_i$ for some $i$. Then by \eqref{p-regular}, we have $\alpha^*(y_{2n_i})=u$, where $u$ is a generator of $H^d(S^d;\Z/p)$. Since $(\adjoint P_\alpha)_{(p)}$ is fiberwise $A_n$-equivalent to $S^d\times G_{(p)}$, there is a map $\mu\colon S^d_{(p)}\times P^nG_{(p)}\to BG_{(p)}$ which restricts to $\alpha_{(p)}\vee(i_1)_{(p)}\colon S^d_{(p)}\vee\Sigma G_{(p)}\to BG_{(p)}$ up to homotopy by Lemma \ref{obstruction}. Then for $n=n_\ell+p-1$, it follows from Lemma \ref{lift} that there is a map of unstable algebras $\Phi\colon H^*(BG;\Z/p)\to H^*(S^d;\Z/p)\otimes H^*(BG;\Z/p)$ satisfying $(1\otimes i_n^*)\circ\Phi=\mu^*$, so we have that the composite
$$H^*(BG;\Z/p)\xrightarrow{\Phi}H^*(S^d;\Z/p)\otimes H^*(BG;\Z/p)\xrightarrow{\rm proj}H^*(BG;\Z/p),$$
say $\phi$, satisfies $i_n^*\circ\phi=i_n^*$. Hence $\phi$ is an isomorphism between modules of indecomposables, implying that $\phi$ is an isomorphism of unstable algebras. Thus the composite
$$H^*(BG;\Z/p)\xrightarrow{\Phi}H^*(S^d;\Z/p)\otimes H^*(BG;\Z/p)\xrightarrow{1\otimes\phi^{-1}}H^*(S^d;\Z/p)\otimes H^*(BG;\Z/p)$$
denoted by $\Psi$ is a map of unstable algebras which projects to the identity map of $H^*(BG;\Z/p)$ and satisfies $\Psi(y_{2n_i})=u\otimes 1+\text{other terms}$.

Let $T$ be a maximal torus of $G$ and let $V$ be the canonical elementary abelian $p$-subgroup of $T$ of rank $\ell$. Consider the composite of the maps of unstable algebras
$$H^*(BG;\Z/p)\xrightarrow{\Psi}H^*(S^d;\Z/p)\otimes H^*(BG;\Z/p)\xrightarrow{1\otimes j^*}H^*(S^d;\Z/p)\otimes H^*(BV;\Z/p)$$
where $j\colon BV\to BG$ is the canonical map. Then by taking adjoint, there is a map
$$\varphi\colon T_V^{j^*}H^*(BG;\Z/p)\to H^*(S^d;\Z/p)$$
such that the composite of $\varphi$ and the natural map $q\colon H^*(BG;\Z/p)\to T_V^{j^*}H^*(BG;\Z/p)$ sends $y_{2n_i}$ to $u$, where $T_V$ means the Lannes $T$-functor associated with $V$ and $T_V^{j^*}H^*(BG;\Z/p)$ is the component of $j^*$ in $T_VH^*(BG;\Z/p)$ as in Section 2. This is a contradiction by Proposition \ref{A} since $d\ge 4$.
\end{proof}

As in the proof of \cite[Theorem 1.2]{KK}, the $A_n$-triviality of $(\adjoint P_\alpha)_{(p)}$ implies that $\G(P_\alpha)_{(p)}$ and $\G(S^d\times G)_{(p)}$ have the same $A_n$-type. We show the converse under some conditions. For this, we investigate self $A_n$-maps of simple Lie groups.

\begin{lemma}
\label{dim}
If $G$ is simple and $K^*$ is a finitely generated non-trivial unstable subalgebra of $H^*(BG;\Z/p)$, then 
$$\dim_\mathrm{Krull}K^*=\dim_\mathrm{Krull}H^*(BG;\Z/p).$$
\end{lemma}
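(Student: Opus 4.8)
The plan is to compare the two Krull dimensions by restricting $K^*$ to a maximal torus and showing that the zero locus of its positive-degree part is on the one hand a \emph{linear} subspace of codimension $\dim_{\mathrm{Krull}}K^*$, and on the other hand invariant under the Weyl group; simplicity of $G$ then forces that subspace to be trivial.

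First I would record the routine facts: since $p>n_\ell$ we have $H^*(BG;\Z/p)=\Z/p[y_{2n_1},\dots,y_{2n_\ell}]$, so $\dim_{\mathrm{Krull}}H^*(BG;\Z/p)=\ell$; the restriction $q\colon H^*(BG;\Z/p)\to H^*(BT;\Z/p)=H^*(BT^\ell;\Z/p)$ to a maximal torus $T$ is an injective algebraic extension onto the ring of invariants of the Weyl group $W$; and $K^*$, being a connected finitely generated graded domain concentrated in even degrees, satisfies \eqref{condition}. Since $K^*$ is a subalgebra of the $\ell$-dimensional domain $H^*(BG;\Z/p)$ and contains a transcendental element (any non-zero element of positive degree), we have $1\le k\le\ell$ where $k:=\dim_{\mathrm{Krull}}K^*$; it thus suffices to rule out $k<\ell$.

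The heart of the argument is a double application of Adams--Wilkerson. By Theorem \ref{AW1} there is an algebraic extension $e\colon K^*\to H^*(BT^k;\Z/p)$, which is integral by Corollary \ref{dimension} (this also confirms its rank is $k$). Feeding the inclusion $\iota\colon K^*\hookrightarrow H^*(BG;\Z/p)$ together with the algebraic extensions $e$ and $q$ into Theorem \ref{AW2} yields a map $g\colon H^*(BT^k;\Z/p)\to H^*(BT^\ell;\Z/p)$ with $g\circ e=q\circ\iota$. Because $q\circ\iota$ is injective and $H^*(BT^k;\Z/p)$ is integral over $e(K^*)$, the quotient $H^*(BT^k;\Z/p)/\ker g$ is integral over $e(K^*)$, hence of Krull dimension $k$, which forces $\ker g=0$; so $g$ is injective. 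The decisive point is then that $g$, being a graded algebra map, sends the degree-$2$ polynomial generators of $H^*(BT^k;\Z/p)$ to linear forms $\ell_1,\dots,\ell_k$ in $H^2(BT^\ell;\Z/p)$, and these are linearly independent since $g$ is injective. Hence $g(H^*(BT^k;\Z/p))=\Z/p[\ell_1,\dots,\ell_k]$ is a polynomial algebra on $k$ independent linear forms, integral over $L^*:=q(\iota(K^*))$, so the common zero locus $V(L^*_+)$ of the positive-degree part $L^*_+$ in $H^2(BT^\ell;\Z/p)$ equals the linear subspace $\{\ell_1=\dots=\ell_k=0\}$, of dimension $\ell-k$. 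I expect this promotion of an abstract integral closure to a genuinely linear subalgebra of $H^*(BT^\ell;\Z/p)$, together with the injectivity of $g$, to be the main thing requiring care.

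To finish, note that $L^*$ is contained in the $W$-invariants, so $V(L^*_+)$ is a $W$-invariant subspace of $H^2(BT^\ell;\Z/p)$; but this space carries the mod $p$ reduction of the reflection representation of $W$, which is irreducible because $G$ is simple (the rational reflection representation of an irreducible Weyl group is absolutely irreducible) and because $p>n_\ell$ forces $p\nmid|W|=\prod_{i=1}^\ell n_i$ (every prime factor of $\prod_i n_i$ being at most $n_\ell$). A non-zero $W$-invariant subspace must therefore be all of $H^2(BT^\ell;\Z/p)$, i.e.\ $\ell-k=\ell$, giving $k=0$ and $K^*=\Z/p$, contrary to non-triviality. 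Hence $k=\ell$. (Simplicity is genuinely needed: for $G=G_1\times G_2$ the unstable subalgebra $H^*(BG_1;\Z/p)\otimes 1$ has strictly smaller Krull dimension; finite generation of $K^*$ is what makes Theorem \ref{AW1} applicable.)
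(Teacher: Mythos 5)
Your proof is correct and follows the same skeleton as the paper's: embed $K^*$ algebraically (hence, by Corollary \ref{dimension}, integrally) into some $H^*(BT^k;\Z/p)$ via Theorem \ref{AW1}, use Theorem \ref{AW2} to produce a comparison map (your $g$, the paper's $\phi$) from $H^*(BT^k;\Z/p)$ to $H^*(BT^\ell;\Z/p)$ over the inclusion $K^*\hookrightarrow H^*(BG;\Z/p)$, check that this map is injective, and finish with the irreducibility of the degree-$2$ representation of the Weyl group. The one step you execute differently is how the Weyl group enters: the paper shows that $\mathrm{Im}\,\phi$ itself is $W$-stable, invoking the uniqueness clause of Theorem \ref{AW1} (algebraic closedness of $H^*(BT^k;\Z/p)$ over $K^*$), so that $\phi(H^2(BT^k;\Z/p))=\mathrm{span}(\ell_1,\dots,\ell_k)$ is a nonzero subrepresentation of $H^2(BT;\Z/p)$ and must be everything; you instead pass to the dual picture and observe that the common zero locus of $K^*_+$, which equals the codimension-$k$ linear subspace $\{\ell_1=\cdots=\ell_k=0\}$ by integrality, is $W$-stable for the elementary reason that $K^*$ consists of $W$-invariant functions. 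These are dual formulations of the same final step; yours trades the paper's most delicate claim ($W$-stability of the image via Adams--Wilkerson uniqueness) for a small amount of commutative algebra about zero loci under graded integral extensions, and your explicit check that $p\nmid|W|=\prod_i n_i$ (so the mod $p$ reflection representation remains irreducible) fills in a point the paper leaves implicit; your justification of injectivity of $g$ via Krull dimension is likewise more detailed than the paper's ``obviously injective.'' The only cosmetic caveat is that the zero locus lives in the dual of $H^2(BT^\ell;\Z/p)$ rather than in $H^2$ itself, but irreducibility passes to dual representations of a finite group, so nothing is affected.
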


\begin{proof}
Since $H^*(BT;\Z/p)$ is a finitely generated $H^*(BG;\Z/p)$-module for $p>n_\ell$, the natural map $H^*(BG;\Z/p)\to H^*(BT;\Z/p)$ is an integral extension. Since $K^*$ satisfies \eqref{condition}, it follows from Theorem \ref{AW1} and \ref{AW2} that there is an algebraic extension $K^*\to H^*(BT^k;\Z/p)$ for some $k$ which satisfies a commutative diagram of unstable algebras
$$\xymatrix{K^*\ar[r]\ar[d]^{\rm incl}&H^*(BT^k;\Z/p)\ar[d]^\phi\\
H^*(BG;\Z/p)\ar[r]&H^*(BT;\Z/p).}$$
By definition $\phi$ is obviously injective. We now consider the action of the Weyl group $W$ of $G$ on $H^*(BT;\Z/p)$. Since $H^*(BG;\Z/p)$ is a ring of invariants of $W$ and $K^*$ is its subalgebra, $K^*$ consists of invariants of $W$. Then since $K^*\to H^*(BT^k;\Z/p)$ is an algebraic extension and $H^*(BT^k;\Z/p)$ is algebraically closed by the uniqueness of Theorem \ref{AW1}, we have that $W\phi(H^*(BT^k;\Z/p))=\phi(H^*(BT^k;\Z/p))$. So the 2-dimensional part $\phi(H^2(BT^k;\Z/p))$ yields a representation of $W$. Since $k>0$ by the non-triviality of $K^*$ and the action of $W$ on $H^2(BT;\Z/p)$ is an irreducible representation, we must have $\phi(H^2(BT^k;\Z/p))=H^2(BT;\Z/p)$, implying that $\phi$ is an isomorphism. Then we obtain
$$\dim_\mathrm{Krull}K^*=\dim_\mathrm{Krull}H^*(BT^k;\Z/p)=\dim_\mathrm{Krull}H^*(BT;\Z/p)=\dim_\mathrm{Krull}H^*(BG;\Z/p),$$
where the first equality follows from Corollary \ref{dimension}.
\end{proof}

\begin{lemma}
\label{self-iso}
Suppose $G$ is simple. If a map $\varphi\colon H^*(BG;\Z/p)\to H^*(BG;\Z/p)$ of unstable algebras is non-trivial, then it is an isomorphism.
\end{lemma}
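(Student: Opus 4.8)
The plan is to deduce the statement from Lemma \ref{dim} together with elementary commutative algebra. Write $R=H^*(BG;\Z/p)$, which by \eqref{p-regular} is the polynomial algebra $\Z/p[y_{2n_1},\ldots,y_{2n_\ell}]$ of Krull dimension $\ell$, and set $K^*=\mathrm{im}\,\varphi\subseteq R$. First I would check that $K^*$ is a finitely generated non-trivial unstable subalgebra of $R$: it is closed under the Steenrod operations because $\varphi$ is a map of unstable algebras; it inherits the instability conditions and condition \eqref{condition} from $R$ (it is connected since $\varphi(1)=1$, it is a domain as a subalgebra of the domain $R$, and it has no odd-degree part); it is finitely generated because $R$ is Noetherian and $K^*\cong R/\ker\varphi$; and it is non-trivial precisely because $\varphi$ is assumed non-trivial.

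Next, I would apply Lemma \ref{dim} to obtain $\dim_\mathrm{Krull}K^*=\dim_\mathrm{Krull}R=\ell$. Since $K^*\cong R/\ker\varphi$ is a domain, $\ker\varphi$ is a prime ideal of $R$; because $R$ is a polynomial algebra over $\Z/p$, the dimension formula for affine domains gives $\mathrm{ht}(\ker\varphi)=\ell-\dim_\mathrm{Krull}(R/\ker\varphi)=0$, so $\ker\varphi$ is a minimal prime of the domain $R$, hence the zero ideal. Thus $\varphi$ is injective.

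Finally, I would observe that $\varphi$ preserves cohomological degree and that each graded component $R^m$ is a finite-dimensional $\Z/p$-vector space. Hence $\varphi|_{R^m}\colon R^m\to R^m$ is an injective endomorphism of a finite-dimensional vector space, so it is bijective; therefore $\varphi$ is surjective, hence an isomorphism of graded algebras, and its inverse automatically commutes with the Steenrod operations. So $\varphi$ is an isomorphism of unstable algebras.

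I expect the only genuinely delicate point to be the legitimacy of invoking Lemma \ref{dim}, that is, checking that $\mathrm{im}\,\varphi$ really is a finitely generated unstable subalgebra satisfying \eqref{condition}; once that is granted, the height computation is routine and the passage from injectivity to surjectivity is immediate from finite-dimensionality of the graded pieces.
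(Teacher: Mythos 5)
Your proposal is correct and follows essentially the same route as the paper: identify $\mathrm{Im}\,\varphi$ as a non-trivial unstable subalgebra, invoke Lemma \ref{dim} to equate Krull dimensions, conclude $\ker\varphi=0$ from primeness of the kernel and the dimension count, and pass from injectivity to bijectivity via finite-dimensionality of the graded pieces. Your version merely spells out the height computation and the verification of condition \eqref{condition} a bit more explicitly than the paper does.
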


\begin{proof}
By assumption, $\mathrm{Im}\,\varphi$ is a non-trivial unstable subalgebra of $H^*(BG;\Z/p)$, so by Lemma \ref{dim}, we have $\dim_\mathrm{Krull}\mathrm{Im}\,\varphi=\dim_\mathrm{Krull}H^*(BG;\Z/p)$. The kernel of $\varphi$ is a prime ideal since $\mathrm{Im}\,\varphi$ is a domain. In particular, if $\varphi$ is not injective, $\dim_\mathrm{Krull}\mathrm{Im}\,\varphi<\dim_\mathrm{Krull}H^*(BG;\Z/p)$, a contradiction. Then $\varphi$ is injective, and hence it is an isomorphism since $H^*(BG;\Z/p)$ is of finite type.
\end{proof}

\begin{remark}
Lemma \ref{self-iso} may alternatively be proved by calculating the action of the Steenrod operations using the mod $p$ Wu formula in \cite{Sh} and the description of $H^*(BG;\Z/p)$ in \cite{HKO} for $G$ exceptional.
\end{remark}

\begin{proposition}
\label{self-equiv}
Suppose $G$ is simple and $n=n_\ell+p-1$. Any $A_n$-map $\varphi\colon G_{(p)}\to G_{(p)}$ which is non-trivial in mod $p$ cohomology is a homotopy equivalence.
\end{proposition}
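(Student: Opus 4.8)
The plan is to push $\varphi$ up one level to the projective spaces, recognize the result as a self map of $H^*(BG;\Z/p)$, apply Lemma~\ref{self-iso}, and then come back down to $G$. Since $\varphi$ is an $A_n$-map, Proposition~\ref{P^nX} furnishes a map $\bar\varphi\colon P^nG_{(p)}\to BG_{(p)}$ with $\bar\varphi\circ j_n\simeq i_1\circ\Sigma\varphi$ as in \eqref{P^nf}, where $j_n\colon\Sigma G_{(p)}\to P^nG_{(p)}$ and $i_1\colon\Sigma G_{(p)}\to BG_{(p)}$ are the canonical maps and $i_1\simeq i_n\circ j_n$. Because $n=n_\ell+p-1$, Lemma~\ref{lift} applied to the map of unstable algebras $\bar\varphi^*\colon H^*(BG;\Z/p)\to H^*(P^nG;\Z/p)$, with the trivial unstable algebra $K^*=\Z/p$, produces a map of unstable algebras $\psi\colon H^*(BG;\Z/p)\to H^*(BG;\Z/p)$ with $i_n^*\circ\psi=\bar\varphi^*$. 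Taking mod $p$ cohomology of the homotopy commutative square and using $i_1^*=j_n^*\circ i_n^*$ gives the identity $i_1^*\circ\psi=(\Sigma\varphi)^*\circ i_1^*$, which under the suspension isomorphism $H^*(\Sigma G;\Z/p)\cong\widetilde H^{\,*-1}(G;\Z/p)$ becomes $\sigma\circ\psi=\varphi^*\circ\sigma$, where $\sigma$ denotes the cohomology suspension. Recall, from \eqref{p-regular}, that for $p>n_\ell$ one has $H^*(G;\Z/p)=\Lambda(x_{2n_1-1},\dots,x_{2n_\ell-1})$ and $H^*(BG;\Z/p)=\Z/p[y_{2n_1},\dots,y_{2n_\ell}]$ with $\sigma(y_{2n_i})=x_{2n_i-1}$ and $\sigma$ annihilating decomposables; in particular the image of $i_1^*$ is exactly the suspension of the span of the exterior generators $x_{2n_i-1}$.

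Now $\psi$ is nontrivial: were it trivial, then $\bar\varphi^*$ and hence $i_1^*\circ\psi$ would be trivial, so $(\Sigma\varphi)^*\circ i_1^*=0$, which by the description of $\mathrm{Im}\,i_1^*$ forces $\varphi^*$ to kill all the exterior generators and hence all of $H^*(G;\Z/p)$, contradicting the hypothesis that $\varphi$ is nontrivial in mod $p$ cohomology. Since $G$ is simple, Lemma~\ref{self-iso} then shows that $\psi$ is an isomorphism. Feeding this back, evaluate $\sigma\circ\psi=\varphi^*\circ\sigma$ on $y_{2n_i}$: because $\psi$ is an isomorphism, $\psi(y_{2n_i})$ is, modulo decomposables, an invertible linear combination of the $y_{2n_j}$ with $n_j=n_i$, whence $\varphi^*(x_{2n_i-1})=\sum_j c_{ij}x_{2n_j-1}$ with the matrix $(c_{ij})$ invertible in each degree. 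Thus $\varphi^*$ restricts to a linear automorphism of the span of the $x_{2n_i-1}$, and being an algebra map on the free exterior algebra they generate, $\varphi^*$ is an isomorphism of $H^*(G;\Z/p)$.

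Consequently $\varphi$ induces an isomorphism on mod $p$ homology; its mapping cone $C$ is then $\Z/p$-acyclic and of finite type, so by the universal coefficient theorem each $\widetilde H_k(C;\Z_{(p)})$ is a finitely generated $\Z_{(p)}$-module equal to its own multiplication by $p$, hence zero by Nakayama's lemma. Therefore $\varphi$ is a $\Z_{(p)}$-homology isomorphism between nilpotent $p$-local spaces of finite type, and so a homotopy equivalence. The main obstacle is the middle step: transferring, through the square \eqref{P^nf} and the cohomology suspension, first nontriviality and then invertibility of $\varphi^*$ on the exterior algebra $H^*(G;\Z/p)$ into the corresponding facts for $\psi$ on the polynomial algebra $H^*(BG;\Z/p)$ and back, which rests on $\sigma$ identifying the indecomposables of $H^*(BG;\Z/p)$ with the exterior generators of $H^*(G;\Z/p)$; the hypothesis $n=n_\ell+p-1$ enters precisely to make Lemma~\ref{lift} applicable, and simplicity of $G$ enters only via Lemma~\ref{self-iso}. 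The remaining steps are formal.
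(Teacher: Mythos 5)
Your proof is correct and follows essentially the same route as the paper's: lift $\varphi$ to $\bar\varphi\colon P^nG_{(p)}\to BG_{(p)}$ via Proposition \ref{P^nX}, apply Lemma \ref{lift} to obtain a non-trivial unstable self-map of $H^*(BG;\Z/p)$, invoke Lemma \ref{self-iso}, and descend to conclude $\varphi^*$ is an isomorphism. You simply spell out in more detail the transfer of (non)triviality through the cohomology suspension and replace the paper's appeal to the Whitehead theorem with an equivalent Nakayama argument on the mapping cone.
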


\begin{proof}
By Proposition \ref{P^nX}, there is a map $\bar{\varphi}\colon P^nG_{(p)}\to BG_{(p)}$ satisfying a homotopy commutative diagram
$$\xymatrix{\Sigma G_{(p)}\ar[r]^{\Sigma\varphi}\ar[d]&\Sigma G_{(p)}\ar[d]\\
P^nG_{(p)}\ar[r]^{\bar{\varphi}}&BG_{(p)}.}$$
Then $\bar{\varphi}$ is non-trivial in the module of indecomposables of mod $p$ cohomology of dimension $\le 2n_\ell$. By Lemma \ref{lift}, there is a map of unstable algebras $\hat{\varphi}\colon H^*(BG;\Z/p)\to H^*(BG;\Z/p)$ satisfying $i_n^*\circ\hat{\varphi}=\bar{\varphi}^*$. By the non-triviality of $\bar{\varphi}^*$ above, $\hat{\varphi}$ is also non-trivial. Then by Lemma \ref{self-iso}, $\hat{\varphi}$ is an isomorphism, implying that $\varphi$ is an isomorphism in the module of indecomposables of mod $p$ cohomology. So $\varphi$ is an isomorphism in mod $p$ cohomology, and therefore since $G$ is of finite type, $\varphi$ is a homotopy equivalence by the J.H.C. Whitehead theorem.
\end{proof}

\begin{proposition}
\label{gauge-ad}
Suppose $G$ is simple, $p>2n_\ell$, $n=n_\ell+p-1$ and $\alpha$ is of infinite order. If $\G(P_\alpha)_{(p)}$ is $A_n$-equivalent to $\G(S^d\times G)_{(p)}$, then the fiberwise $p$-localized adjoint bundle $(\adjoint P_\alpha)_{(p)}$ is fiberwise $A_n$-equivalent to $S^d\times G_{(p)}$.
\end{proposition}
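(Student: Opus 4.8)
The plan is to reduce, via Theorem~\ref{A_k-section} in its $p$-local form, to producing an $A_n$-section of $\pi_\alpha\colon\G(P_\alpha)_{(p)}\to G_{(p)}$, and to obtain one by transporting the obvious section for the trivial bundle along the hypothesised equivalence and then correcting by a self-equivalence of $G_{(p)}$. For the trivial bundle the map $\pi_0\colon\G(S^d\times G)_{(p)}\to G_{(p)}$ has the section $j_0$ given by the constant sections, which is a homomorphism, hence an $A_\infty$-map. Given an $A_n$-equivalence $\psi\colon\G(S^d\times G)_{(p)}\xrightarrow{\simeq}\G(P_\alpha)_{(p)}$, put $s:=\psi\circ j_0$ and $\theta:=\pi_\alpha\circ s\colon G_{(p)}\to G_{(p)}$; both are $A_n$-maps by Proposition~\ref{A_n-property}(1). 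If $\theta$ is a homotopy equivalence, then $\theta^{-1}$ is an $A_n$-map by Proposition~\ref{A_n-property}(2), so $s\circ\theta^{-1}$ is an $A_n$-map with $\pi_\alpha\circ(s\circ\theta^{-1})\simeq\theta\circ\theta^{-1}\simeq 1_{G_{(p)}}$; thus $s\circ\theta^{-1}$ is an $A_n$-section of $\pi_\alpha$, and Theorem~\ref{A_k-section} finishes the proof.

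So everything comes down to showing $\theta$ is a homotopy equivalence, and since $n=n_\ell+p-1$ it suffices, by Proposition~\ref{self-equiv}, to show that $\theta$ is non-trivial in mod $p$ cohomology; I would detect this on the homotopy group $\pi_{2n_\ell-1}(\,\cdot\,)_{(p)}$ in the top sphere degree of $G$. The based gauge group $\G_0(P_\alpha):=\Omega\,\mathrm{map}_0(S^d,BG;\alpha)$ sits in a fibration $\G_0(P_\alpha)_{(p)}\to\G(P_\alpha)_{(p)}\xrightarrow{\pi_\alpha}G_{(p)}$, obtained by looping the evaluation fibration of $\mathrm{map}(S^d,BG;\alpha)\simeq B\G(P_\alpha)$, and $\pi_k(\G_0(P_\alpha))\cong\pi_{k+d}(G)$: indeed $\mathrm{map}_*(S^d,BG)=\Omega^dBG$ is an $H$-space, so all of its components are homotopy equivalent, and this identification is independent of $\alpha$; the same discussion applies to $S^d\times G$. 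From the homotopy exact sequence, $\pi_{\alpha*}$ is an isomorphism on $\pi_{2n_\ell-1}(\,\cdot\,)_{(p)}$ as soon as $\pi_{2n_\ell-1+d}(G)_{(p)}=\pi_{2n_\ell-2+d}(G)_{(p)}=0$, and likewise for $\pi_{0*}$. Now $\alpha$ of infinite order forces $\pi_{d-1}(G)$ to be infinite, hence $d=2n_i$ for some $i$; then $2n_\ell-1+d$ and $2n_\ell-2+d$ are at most $4n_\ell-1$, neither of them is of the form $2n_k-1$, so $\pi_*(G)$ has no free part in those degrees, and since $p>2n_\ell$ these degrees lie strictly below the first $p$-torsion of $G_{(p)}\simeq\prod_kS^{2n_k-1}_{(p)}$ from \eqref{p-regular} (which occurs in degree $2n_1+2p-4>4n_\ell$). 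Hence the required vanishing holds.

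It follows that $\theta_*=\pi_{\alpha*}\circ\psi_*\circ j_{0*}$ is an isomorphism of $\pi_{2n_\ell-1}(G)_{(p)}$: $\psi_*$ is an isomorphism because $\psi$ is a homotopy equivalence, and $j_{0*}$ is inverse to $\pi_{0*}$ on this group. Since for $p>2n_\ell$ the group $\pi_{2n_\ell-1}(G)_{(p)}$ is a non-zero free $\Z_{(p)}$-module (of rank the multiplicity of $n_\ell$ in the type), $\theta_*$ is non-zero modulo $p$, so $\theta^*\neq 0$. By Proposition~\ref{self-equiv}, $\theta$ is a homotopy equivalence, and the first paragraph then concludes the argument.

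The step I expect to be the real content is the computation in the second paragraph: choosing the degree $2n_\ell-1$ and verifying that the evaluation maps $\pi_\alpha$ and $\pi_0$ become isomorphisms there. This is precisely where the two hypotheses are used — ``$\alpha$ of infinite order'' to pin down $d=2n_i$ and hence the shift in the fibre homotopy, and ``$p>2n_\ell$'' to push the offending $p$-torsion of $G$ out of the range $\le 4n_\ell-1$ — and it is also the reason the statement is formulated at a single prime rather than integrally. Everything else is formal manipulation of $A_n$-maps via Proposition~\ref{A_n-property}, Proposition~\ref{self-equiv}, and Theorem~\ref{A_k-section}.
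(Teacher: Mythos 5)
Your proposal is correct and follows essentially the same route as the paper: form $\theta=\pi_\alpha\circ\psi\circ j_0$, show it is an $A_n$-self-map that is an isomorphism on $\pi_{2n_\ell-1}(\,\cdot\,)_{(p)}$ via the evaluation fibration (using $d=2n_i$ and $p>2n_\ell$ to kill the fibre's homotopy in degrees $2n_\ell-1$ and $2n_\ell-2$), deduce mod $p$ cohomological non-triviality, invoke Proposition~\ref{self-equiv}, and correct the transported section by $\theta^{-1}$ before applying Theorem~\ref{A_k-section}. The only cosmetic difference is that you argue the vanishing of $\pi_{2n_\ell-1+d}(G)_{(p)}$ and $\pi_{2n_\ell-2+d}(G)_{(p)}$ by bounding the first $p$-torsion of the product of spheres rather than quoting Toda's table directly.
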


\begin{proof}
Let $g\colon\G(S^d\times G)_{(p)}\to\G(P_\alpha)_{(p)}$ be an $A_n$-equivalence, and define $f\colon G_{(p)}\to G_{(p)}$ by the composite
$$G_{(p)}\xrightarrow{\sigma_{(p)}}\G(S^d\times G)_{(p)}\xrightarrow{g}\G(P_\alpha)_{(p)}\xrightarrow{\pi_{(p)}}G_{(p)}$$
where $\sigma\colon G\to\G(S^d\times G)$ is the canonical section of the map $\pi\colon\G(S^d\times G)\to G$. Since $\sigma_{(p)}$ and $\pi_{(p)}$ are $A_\infty$-maps and $g$ is an $A_n$-map, $f$ is an $A_n$-map. We shall show that $f$ is a homotopy equivalence. We consider the map $\pi\colon\G(P_\beta)\to G$ in homotopy groups for a map $\beta\in\pi_d(BG)$. As in \cite{To}, we have
$$\pi_*(S^{2k-1}_{(p)})\cong\begin{cases}
\Z_{(p)}&*=2k-1\\
0&0\le*\le 2p+2k-5\text{ and }*\ne 2k-1.
\end{cases}$$
Then by \eqref{p-regular}, we get 
\begin{equation}
\label{pi(G)}
\pi_*(G_{(p)})\cong\begin{cases}
\Z_{(p)}^2&*=2m-1\text{ and }G=\Spin(4m)\\
\Z_{(p)}&*=2n_1-1,\ldots,2n_\ell-1\text{ and the above condition fails}\\
0&0\le*\le 2p-1\text{ and }*\ne 2n_1-1,\ldots,2n_\ell-1.
\end{cases}
\end{equation}
Consider the homotopy exact sequence
$$\pi_{2i-1}(\Omega^dG_{(p)})\to\pi_{2i-1}(\G(P_\beta)_{(p)})\xrightarrow{\pi_*}\pi_{2i-1}(G_{(p)})\to\pi_{2i-2}(\Omega^dG_{(p)})$$
associated with the homotopy fibration $\Omega^dG\to\G(P_\beta)\xrightarrow{\pi}G$ corresponding to the evaluation fibration $\Omega^{d-1}G\to\mathrm{map}(S^d,BG;\beta)\to BG$. Since $\alpha$ is of infinite order, $d=2n_i$ for some $i$. Then by \eqref{pi(G)} and $p>2n_\ell$, we have $\pi_{2n_\ell-1}(\Omega^dG_{(p)})=0$ and $\pi_{2n_\ell-2}(\Omega^dG_{(p)})=0$, implying that $\pi_*\colon\pi_{2n_\ell-1}(\G(P_\beta)_{(p)})\to\pi_{2n_\ell-1}(G_{(p)})$ is an isomorphism. Then in particular the maps $\sigma_*\colon\pi_{2n_\ell-1}(G_{(p)})\to\pi_{2n_\ell-1}(\G(S^d\times G)_{(p)})$ and $\pi_*\colon\pi_{2n_\ell-1}(\G(P_\alpha)_{(p)})\to\pi_{2n_\ell-1}(G_{(p)})$ are isomorphisms, and hence $f_*\colon\pi_{2n_\ell-1}(G_{(p)})\to\pi_{2n_\ell-1}(G_{(p)})$ is an isomorphism. Since the Hurewicz map $\pi_{2n_\ell-1}(G_{(p)})\to QH_{2n_\ell-1}(G_{(p)})$ is an isomorphism, it follows from the naturality of the Hurewicz map that $f_*\colon QH_{2n_\ell-1}(G_{(p)})\to QH_{2n_\ell-1}(G_{(p)})$ is an isomorphism, implying $f$ is non-trivial in mod $p$ cohomology, where $QA$ is the module of indecomposables of a ring $A$. Thus by Proposition \ref{self-equiv}, $f$ is a homotopy equivalence. Now the composite $g\circ\sigma_{(p)}\circ f^{-1}$ is an $A_n$-section of $\pi_{(p)}\colon\G(P_\alpha)_{(p)}\to G_{(p)}$. Therefore $(\adjoint P_\alpha)_{(p)}$ is fiberwise $A_n$-equivalent to $S^n\times G_{(p)}$ by Theorem \ref{A_k-section}.
\end{proof}

\begin{corollary}
\label{divisibility-3}
Suppose $G$ is simple, $p>2n_\ell$, $n=n_\ell+p-1$ and $\alpha$ is of infinite order. If $\G(P_\alpha)_{(p)}$ is $A_n$-equivalent to $\G(S^d\times G)_{(p)}$, then $\alpha$ is divisible by $p$.
\end{corollary}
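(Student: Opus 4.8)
The plan is simply to compose the two implications already established: Proposition~\ref{gauge-ad} converts an $A_n$-equivalence of gauge groups into fiberwise $A_n$-triviality of the adjoint bundle, and Proposition~\ref{divisibility-2} converts that triviality into divisibility of the classifying map. Concretely, under the standing hypotheses $G$ simple, $p>2n_\ell$, $n=n_\ell+p-1$ and $\alpha$ of infinite order, Proposition~\ref{gauge-ad} applied to the assumed $A_n$-equivalence $\G(P_\alpha)_{(p)}\simeq\G(S^d\times G)_{(p)}$ gives that $(\adjoint P_\alpha)_{(p)}$ is fiberwise $A_n$-equivalent to $S^d\times G_{(p)}$; then Proposition~\ref{divisibility-2}, whose hypothesis $n=n_\ell+p-1$ matches and whose other input ``$\alpha$ of infinite order'' is present, yields that $\alpha$ is divisible by $p$, provided one also checks its remaining hypothesis $d\ge 4$.

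The only point that needs an argument is therefore the bound $d\ge 4$, and I would deduce it from $\alpha\in\pi_d(BG)\cong\pi_{d-1}(G)$ being of infinite order. By \eqref{p-regular} (indeed already rationally, via the Hopf theorem) the groups $\pi_*(G)$ are finite except in the degrees $2n_1-1,\ldots,2n_\ell-1$, so infiniteness of $\pi_{d-1}(G)$ forces $d=2n_i$ for some $i$. Since $G$ is simple it is in particular nonabelian, so $\pi_3(G)\cong\Z$ and the smallest type is $n_1=2$; hence $d=2n_i\ge 2n_1=4$.

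I expect no real obstacle here: all of the substance is contained in Propositions~\ref{gauge-ad} and~\ref{divisibility-2}, and the two hypothesis sets are compatible (the value $n=n_\ell+p-1$ agrees, and $p>2n_\ell$ is at least as strong as anything required downstream). The one thing not to overlook is precisely the degree condition $d\ge 4$ demanded by Proposition~\ref{divisibility-2}, which — as shown above — is automatic once $\alpha$ is of infinite order and $G$ is simple.
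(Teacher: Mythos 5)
Your proposal is correct and follows exactly the paper's own argument: the paper likewise notes that $G$ simple gives $n_1\ge 2$, hence $d\ge 4$, and then combines Propositions \ref{divisibility-2} and \ref{gauge-ad}. Your additional justification of $d=2n_i$ from the infinite order of $\alpha$ is a fine elaboration of the same point.
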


\begin{proof}
Since $G$ is simple, we have $n_1\ge 2$, implying $d\ge 4$. Then the proof is done by combining Proposition \ref{divisibility-2} and \ref{gauge-ad}.
\end{proof}

We now obtain:

\begin{theorem}
\label{divisibility}
Suppose $G$ is simple, $p>2n_\ell$ and $\beta\in\pi_d(BG)$ is of infinite order. If $\alpha$ is not divisible by $p$ and $N$ is large enough, then $\G(P_\alpha)_{(p)}$ is not $A_n$-equivalent to $\G(P_{p^N\alpha})_{(p)}$ for some $n$.
\end{theorem}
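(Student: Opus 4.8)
The plan is to assemble the two complementary results established above, namely Proposition \ref{divisibility-1} (large $p$-divisibility forces fiberwise $A_n$-triviality of the adjoint bundle) and Corollary \ref{divisibility-3} (an $A_n$-equivalence between $\G(P_\alpha)_{(p)}$ and the gauge group of the trivial bundle forces $\alpha$ to be divisible by $p$), together with transitivity of $A_n$-equivalence from Proposition \ref{A_n-property}. The only real choice to make is that of $n$, and the correct one is dictated by Corollary \ref{divisibility-3}: take $n=n_\ell+p-1$ once and for all.

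First I would fix $n=n_\ell+p-1$ and let $N$ be the integer produced by Proposition \ref{divisibility-1} for this $n$; since that proposition imposes no constraint on $n$, this is legitimate, and $N$ depends only on $G$, $p$ and $n$. As $p^N\alpha$ is divisible by $p^N$, Proposition \ref{divisibility-1} gives that $(\adjoint P_{p^N\alpha})_{(p)}$ is fiberwise $A_n$-equivalent to $S^d\times G_{(p)}$. As recalled above (following \cite[Theorem 1.2]{KK}), a fiberwise $A_n$-equivalence of adjoint bundles induces an $A_n$-equivalence of the associated section spaces, so together with the $A_\infty$-equivalences $\G(P_{p^N\alpha})_{(p)}\simeq\Gamma((\adjoint P_{p^N\alpha})_{(p)})$ and $\G(S^d\times G)_{(p)}\simeq\Gamma(S^d\times G_{(p)})$ one obtains that $\G(P_{p^N\alpha})_{(p)}$ is $A_n$-equivalent to $\G(S^d\times G)_{(p)}$. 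Then I argue by contradiction: if $\G(P_\alpha)_{(p)}$ were $A_n$-equivalent to $\G(P_{p^N\alpha})_{(p)}$, then since $A_n$-equivalences form an equivalence relation (Proposition \ref{A_n-property}) there would be an $A_n$-equivalence $\G(P_\alpha)_{(p)}\simeq\G(S^d\times G)_{(p)}$. As $G$ is simple, $p>2n_\ell$, $n=n_\ell+p-1$ and $\alpha$ is of infinite order, Corollary \ref{divisibility-3} would force $\alpha$ to be divisible by $p$, contrary to hypothesis. Hence $\G(P_\alpha)_{(p)}$ is not $A_n$-equivalent to $\G(P_{p^N\alpha})_{(p)}$, which is the assertion for this choice of $n$ and $N$.

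Since every ingredient is already in hand, there is no genuinely new obstacle; the points that require attention are organizational. First one must check that the numerology is consistent: the value $n=n_\ell+p-1$ and the inequality $p>2n_\ell$ are exactly what Lemma \ref{lift}, Proposition \ref{self-equiv}, and hence Corollary \ref{divisibility-3} demand, while Proposition \ref{divisibility-1} is insensitive to the choice of $n$, so a single $n$ governs both directions and the resulting $N$ is uniform. Second, one must be sure that the passage from a fiberwise $A_n$-equivalence of the adjoint bundles to an $A_n$-equivalence of the gauge groups is legitimate; this is precisely the argument recalled from \cite{KK}, resting on Theorem \ref{A_k-section} and the identification $\G(P)_{(p)}\simeq\Gamma(\adjoint P_{(p)})$.
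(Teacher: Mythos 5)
Your proposal is correct and is essentially the paper's own proof, which simply combines Proposition \ref{divisibility-1} and Corollary \ref{divisibility-3}; you have merely made explicit the choice $n=n_\ell+p-1$, the transitivity step, and the passage from fiberwise $A_n$-triviality of the adjoint bundle to the $A_n$-equivalence of gauge groups, all of which the paper leaves implicit.
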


\begin{proof}
Combine Proposition \ref{divisibility-1} and Corollary \ref{divisibility-3}.
\end{proof}

\begin{proof}
[Proof of Theorem \ref{main}]
Let $p_1,p_2,\ldots$ be primes greater than $2n_\ell$, and let $N_1,N_2,\ldots$ be large integers. Let $P_k$ denote the principal $G$-bundle over $S^d$ corresponding to $k$-times an infinite order generator of $\pi_d(BG)$. Then by Theorem \ref{divisibility}, $\G(P_{p_1^{N_1}\cdots p_k^{N_k}})$ for $k\ge 1$ have distinct $A_\infty$-types. Therefore the proof is completed by the infiniteness of primes.
\end{proof}


\begin{thebibliography}{KKKT}
\bibitem[AW]{AW}J.F. Adams and C.W. Wilkerson, {\it Finite H-spaces and Algebras over the Steenrod algebra}, Ann. of Math. {\bf 111} (1980), no. 1, 95--143.
\bibitem[A]{A}J. Aguad\'e, {\it Computing Lannes $T$-functor}, Israel J. Math. {\bf 65} (1989), no. 3, 303--310.
\bibitem[AB]{AB}M.F. Atiyah and R. Bott, {\it The Yang-Mills equations over Riemann surfaces}, Phil. Trans. R. Soc. Lond. A {\bf 308} (1983), 523--615.
\bibitem[CS]{CS}M.C. Crabb and W.A. Sutherland, {\it Counting homotopy types of gauge groups}, Proc. London Math. Soc. (3) {\bf 81} (2000), 747--768.
\bibitem[Fu]{Fu}M. Fuchs, {\it Verallgemeinerte Homotopie-Homomorphismen und klassifizierende Ra\"ume}, Math. Ann. {\bf 161} (1965) 197--230.
\bibitem[G]{G}D.H. Gottlieb, {\it Applications of bundle map theory}, Trans. Amer. Math. Soc. {\bf 171} (1972), 23--50.
\bibitem[HK1]{HK1}H. Hamanaka and A. Kono, {\it Unstable $K^1$-group and homotopy type of certain gauge groups}, Proc. Roy. Soc. Edinburgh Sect. A {\bf 136} (2006), no. 1, 149--155.
\bibitem[HK2]{HK2}H. Hamanaka and A. Kono, {\it Homotopy type of gauge groups of $\SU(3)$-bundles over $S^6$}, Topology Appl. {\bf 154} (2007), no. 7, 1377--1380.
\bibitem[HKK]{HKK}H. Hamanaka, S. Kaji, and A. Kono, {\it Samelson products in $\Sp(2)$}, Topology Appl. {\bf 155} (2008), no. 11, 1207--1212.
\bibitem[HKO]{HKO}S. Hasui, D. Kishimoto, and A. Ohsita, {\it Samelson products in $p$-regular exceptional Lie groups}, Topology Appl. {\bf 178} (2014), 17--29.
\bibitem[KKKT]{KKKT}Y. Kamiyama, D. Kishimoto, A. Kono, and S. Tsukuda, {\it Samelson products of $\mathrm{SO}(3)$ and applications}, Glasg. Math. J. {\bf 49} (2007), 405--409.
\bibitem[KK]{KK}D. Kishimoto and A. Kono, {\it Splitting of gauge groups}, Trans. Amer. Math. Soc. {\bf 362} (2010), 6715--6731.
\bibitem[K]{K}A. Kono, {\it A note on the homotopy type of certain gauge groups}, Proc. Roy. Soc. Edinburgh: Sect. A {\bf 117} (1991), 295--297.
\bibitem[L]{L}J. Lannes, {\it Sur les espaces fonctionnels dont la source est le classifiant d'unp-groupe ab\'elien \'el\'ementaire}, Publ. Math. Inst. Hautes \'Etudes Sci. {\bf 75} (1992), 135--244.
\bibitem[Ma]{Ma}G. Masbaum, {\it Sur la cohomologie du classfiant du groupe de jauge sur certains 4-complexes}, C. R. Acad. Sci. Paris S\'er. I Math. {\bf 310} (1990) 115--118.
\bibitem[N]{N}M.D. Neusel, {\it Localizations over the Steenrod algebras. The lost chapter}, Math. Z. {\bf 235} (2000), 353--378.
\bibitem[Sh]{Sh}P.B. Shay, {\it ${\rm mod}\;p$ Wu formulas for the Steenrod algebra and the Dyer-Lashof algebra}, Proc. Amer. Math. Soc. {\bf 63} (1977), no. 2, 339--347.
\bibitem[St]{St}J.D. Stasheff, {\it Homotopy associativity of H-spaces, I \& II}, Trans. Amer. Math. 
Soc. {\bf108} (1963), 275--292; 293--312. 
\bibitem[Th]{Th}S.D. Theriault, {\it The homotopy types of $\Sp(2)$-gauge groups}, Kyoto J. Math. {\bf 50} (2010), no. 3, 591--605.
\bibitem[To]{To}H. Toda, Composition Methods in Homotopy Groups of Spheres, Ann. of Math. Studies {\bf 49}, Princeton Univ. Press, Princeton N.J., 1962.
\bibitem[Ts]{Ts}M. Tsutaya, {\it Finiteness of $A_n$-equivalence types of gauge groups}, J. London Math. Society {\bf 85} (2012),142--164.
\bibitem[W]{W}C.W. Wilkerson, {\it Integral closure of unstable Steenrod algebra actions}, J. Pure and Appl. Algebra {\bf 13} (1978), 49--55.
\end{thebibliography}
\end{document}